\newcommand{\nwc}{\newcommand}
\nwc{\draftdate}{\today}
\newtheorem{thm}{Theorem}[section]
\newtheorem{coro}[thm]{Corollary}
\newcommand{\barint}{\hbox{$\int$\kern-0.75\intwidth
\vrule width 0.5\intwidth height 2.4pt depth -2pt\kern0.25\intwidth}}
\newlength\intwidth
\newcommand\avint{\hbox{\hbox{$\displaystyle \int$}\hbox{\kern-.9em{$-$}}}}
\newcommand\smavint{\hbox{\hbox{$\int$}\hbox{\kern-.75em{$-$}}}}
\nwc{\st}{^{\mbox{\it st}}}
\nwc{\qref}[1]{(\ref{#1})}
\nwc{\veloc}{v}
\nwc{\rhoc}{\beta}
\nwc{\hl}{\hat{L}}
\def\Xint#1{\mathchoice
{\XXint\displaystyle\textstyle{#1}}%
{\XXint\textstyle\scriptstyle{#1}}%
{\XXint\scriptstyle\scriptscriptstyle{#1}}%
{\XXint\scriptscriptstyle\scriptscriptstyle{#1}}%
\!\int}
\def\XXint#1#2#3{{\setbox0=\hbox{$#1{#2#3}{\int}$}
\vcenter{\hbox{$#2#3$}}\kern-.51\wd0}}
\def\dashint{\Xint-}
\nwc{\intRp}{\int_0^\infty}
\nwc{\aint}{\dashint}
\nwc{\aaint}{\dashint}
\newcommand{\cA}{{\cal A}}
\newcommand{\cF}{{\cal F}}
\newcommand{\R}{\mathbb R}
\newcommand{\be}{\begin{eqnarray}}
\newcommand{\ee}{\end{eqnarray}}
\newcommand{\ben}{\begin{eqnarray*}}
\newcommand{\een}{\end{eqnarray*}}
\title{}
\author{}
\date{}
\numberwithin{equation}{section}
\begin{document}
	
\title{A Nonnegative Weak Solution to the Phase Field Crystal Model with Degenerate Mobility}
\author{
	Toai Luong
	\thanks{Corresponding author, 
		Department of Mathematics, The University of Tennessee, Knoxville, TN 37996, USA. 
		Email: tluong4@utk.edu.
	} \and 
	Steve Wise
	\thanks{
		Department of Mathematics, The University of Tennessee, Knoxville, TN 37996, USA. 
		Email: swise1@utk.edu
	} 
}

\date{\today}
\maketitle

\begin{abstract}
	Phase field crystal is a model used to describe the behavior of crystalline materials at the mesoscale. 
	In this study, we investigate the well-posedness of a phase field crystal equation 
	subject to a degenerate mobility $M(u)$ that equals zero for $u\leq 0$. 
	First, we prove the existence of a weak solution to a phase field crystal equation with non-degenerate cutoff mobility. 
	Then, assuming that the initial data $u_0(x)$ is positive, 
	we establish the existence of a nonnegative weak solution to the degenerate case. 
	Such solution is the limit of solutions corresponding to non-degenerate mobilities. 
	We also verify that such a weak solution satisfies an energy 
	dissipation inequality. 	
\end{abstract}

\textbf{Keywords:} phase field crystal; degenerate mobility; weak solution; positive solution; energy inequality

\section{Introduction}
In this paper, we consider a phase field crystal (PFC) model. Let $u:\Omega\to [0,\infty)$ be an $\Omega$--periodic unary atom density, where $\Omega=[0,2\pi]^d\subset \R^d$, $1 \leq d \leq 3$. The free energy for the state defined by $u$ is
\begin{align}\label{pfc-ener}
	\cF(u)=\int_\Omega \left[ W(u) + \kappa\left(\frac{1}{2}u^2 - |\nabla u|^2 + \frac{1}{2}|\Delta u|^2\right) \right] dx,
\end{align}
where $W:[0,\infty)\to [W_0,\infty)$ is a homogeneous free energy density, $W_0>-\infty$, and $\kappa>0$ is a parameter. The PFC equation  is a conserved gradient flow with respect to the PFC energy functional $\cF(u)$ and is written as 
	\begin{align}
	\label{pfc-eqn1} 
u_t&=\nabla\cdot(M(u)\nabla\omega), \quad \mbox{in} \  \Omega_T:=\Omega\times(0,T), 
	\\	
	\label{pfc-eqn2}
\omega &=W'(u) + \kappa(u + 2\Delta u + \Delta^2 u), \quad \mbox{in} \  \Omega_T, 
\end{align}	
where $T>0$ is the final time.

From a mathematical point of view, this equation and its variants have been analyzed in \cite{Miranville2015, Wu2022, Grasselli2014, Grasselli2015, Conti2016} and references therein. 
In particular, Miranville studied the existence and uniqueness of variational solutions for a PFC model with constant mobility $M(u) \equiv 1$ and logarithmic (singular) nonlinear terms \cite{Miranville2015}. 
Wu and Zhu theoretically and numerically analyzed the well-posedness of a square PFC model in the three-dimensional case \cite{Wu2022}. 
The significant difference between our result and theirs is that we are studying the PFC equation subject to a non-constant mobility $M(u)$ that is dependent on $u$.

	\subsection{Derivation}
	
In this section, we quickly derive the PFC model to motivate the form that we investigate. See, for example, the nice paper by Archer~et al.~\cite{archer2019} for all of the possible modeling choices. The phase field crystal (PFC) model was introduced in~\cite{elder02,elder04} as continuum description of solidification in a unary material. It was formulated as a mass conservative version of the classical Swift-Hohenberg equation, but, later, the model was re-derived,  via certain reasonable simplifications, from the dynamical density functional theory (DDFT)~\cite{elder07}. (See~\cite{archer2019} for a more in-depth derivation from the DDFT framework.)   In particular, assuming a constant, uniform temperature field, one expresses the dimensionless Helmholtz free energy density via
	\[
F(\rho) = \int_\Omega\left[ f(\rho) + \frac{\kappa}{2}(\rho-1) \mathcal{C} (\rho-1) \right]dx,
	\]
where $\Omega$ is some spatial domain of interest; $\rho:\Omega \to [0,\infty)$ is the number density field of the unary material in $\Omega$; $\kappa >0$ is a positive dimensionless constant; $f$ is the homogeneous Helmholtz free energy density; and $\mathcal{C}$ is a symmetric, potentially nonlocal, two-point correlation operator. Here we have taken the state $\rho = \rho_o = 1$ as the dimensionless reference density.

The homogeneous free energy density, $f$, is often taken to satisfy an ``ideal gas" model:
	\[
f(\rho) = \rho \ln\left( \rho \right) -  \rho .
	\]
Note that this energy density has a global minimum at the reference state $\rho = 1$. Often, one makes a (Taylor) polynomial approximation of the logarithmic term about the reference density to make the model more tractable. However,  the singular nature of the logarithmic term contributes to the positivity of the solutions, and this is an important feature in the numerical and PDE analyses.  At constant temperature, one can argue that the dynamics of the model should satisfy a diffusion-dominated mass conservation equation of the form
	\begin{equation}
\partial_t \rho = -\nabla\cdot \mathbf{J} , \quad \mathbf{J} = - \rho \nabla \mu,	
	\end{equation}
where $\mathbf{J}$ is the diffusion flux; and $\mu$ is the chemical potential:
	\begin{equation}
\mu := \delta_\rho F = \log (\rho) + \kappa \mathcal{C} (\rho-1) ,
	\label{eqn:chemical-potential}
	\end{equation}
where we have assumed, for simplicity, that the boundary conditions are periodic. Observe that
	\[
\nabla\cdot (\rho \nabla(\log(\rho))) = \Delta \rho,
	\]
which suggests that the model incorporates standard particle diffusion at leading order. For this derivation, we will assume that $\mathcal{C}$ is a differential operator of the form
	\[
\mathcal{C}(\varrho) =  \gamma \varrho + 2\Delta \varrho + \Delta^2 \varrho ,
	\]
where $\gamma\in \mathbb{R}$ is a dimensionless parameter. This chosen form  promotes the formation of spatially oscillatory density fields and is most commonly used. To sum up, the chemical potential is modeled as
	\begin{equation}
\mu = \log (\rho)  +\kappa\gamma (\rho-1) + \kappa\left(2\Delta\rho +\Delta^2\rho \right) .
	\label{eqn:chemical-potential-2}
	\end{equation}

As a consequence of our model assumptions, the total free energy is dissipated as the system evolves toward equilibrium, and the dissipation rate is
	\[
d_t F = -\int_\Omega \rho|\nabla \mu|^2\, dx \le 0.
	\]
Of course, is be necessary to justify the property that $\rho >0$ (or at least $\rho\ge0$) point-wise for the model to make sense.

As we indicated, it is most typical in the physics literature to use a Taylor polynomial approximation of the logarithmic free energy density, around the dimensionless reference state $\rho =\rho_o = 1$, of the form
	\begin{equation}
 \rho\ln(\rho) -\rho  \approx -1 + \frac{1}{2} (\rho-1)^2 - \frac{1}{6}(\rho-1)^3 + \frac{1}{12} (\rho-1)^4 .
	\end{equation}
This approximation regularizes the singular nature of the ideal gas law, but also it removes the singular free energy barrier against states that have negative density regions.

Note that we are keeping the degenerate mobility and approximating the ideal gas law with a polynomial. Archer~et al.~\cite{archer2019} have argued that if, on the other hand, one approximates the degenerate mobility with a positive constant, then one is absolutely forced to replace the logarithmic potential (the ideal gas law) by the approximating Taylor polynomial. Otherwise, the model predictions are unphysical.

With the simplifications described above, we have the following PFC model. For each $\Omega$--periodic state function $u:\Omega \to [0,\infty)$, the free energy is
	\begin{equation}
\mathcal{F}(u) = \int_\Omega \left[ f_0(u) +  \frac{\kappa}{2}(u-1) \left( \gamma (u-1) + 2\Delta (u-1) + \Delta^2 (u-1) \right) \right]  dx,
	\end{equation}
where
	\[
 f_0(u) = -1 + \frac{1}{2} (u-1)^2 - \frac{1}{6}(u-1)^3 + \frac{1}{12} (u-1)^4.
	\]
The conserved gradient flow describing the evolution of the state function $u$ is
	\begin{align*}
\partial_t u & = -\nabla\cdot \left( u \nabla \omega \right) ,
	\\
\omega & = f_0'(u) +\kappa \left( \gamma(u-1) + 2\Delta u+ \Delta^2 u\right),
	\end{align*} 
which matches the form that we introduced earlier, after defining $W(u)$ to be
	\[
W(u) = f_0(u) + \frac{\kappa\gamma}{2}(u-1)^2 - \frac{\kappa}{2}u^2.	
	\]

\subsection{Main result}
In this paper, we  model the mobility $M(u)$ as
	\begin{align}
	\label{M(u)}
M(u)=
	\begin{cases} 
u , &  u>0, 
	\\
0 , &  u\leq 0, 
	\end{cases}
	\end{align}
which degenerates when $u \leq 0$. We assume that the potential $W\in C^2(\R)$ satisfies the following growth conditions:
\begin{align}
	\label{grow-1}b_1 z^{2m}-b_2\leq&W(z) + \frac{\epsilon}{2}z^2\leq b_3 z^{2m}+b_4, \\
	\label{grow-2}|&W'(z)|\leq b_3|z|^{2m-1}+b_4, \\
	\label{grow-3}b_1 z^{2m-2}-b_2\leq&W''(z)\leq b_3 z^{2m-2}+b_4,
\end{align} 
for all $z\in\R$, where $m>1$ is an integer, $0<\epsilon<\kappa$, and $b_1, b_2, b_3, b_4$ are positive constants. A simple example of such a potential $W$ is
	\begin{align*}
W(u)=\frac{1}{4}(u-1)^4 - \frac{\epsilon}{2}(u-1)^2,
	\end{align*}
where $\epsilon\in \mathbb{R}$. This satisfies the above growth conditions with  $m=2$.

Our analysis used a framework similar to \cite{Elliott-CH-sln,DaiCH-sln,LuongFCH-sln}.  
First, we approximate the degenerate mobility $M(u)$ using a nondegenerate mobility $M_\theta(u)$ defined  by
\begin{align}\label{M-theta}
	M_\theta(u)=\begin{cases}
				u, & u>\theta, \\
				\theta, & u\leq\theta,
				\end{cases}
\end{align}
for any $\theta\in(0,1)$, 
and show that the equation \qref{pfc-eqn1}--\qref{pfc-eqn2} with the mobility $M_\theta(u)$ has a sufficiently regular weak solution $u_\theta$,  
that is, we prove the following theorem.

	\begin{thm}\label{thm-main1}
Let $u_0\in H^2(\Omega)$. For any given constant $T>0$, there exists a function $u_\theta$ that satisfies the following conditions:
	\begin{enumerate}	
	\item[(i)] 
For any $0<\alpha<1/2$,
	\begin{align*}
u_\theta & \in L^2(0,T;H^5(\Omega))\cap L^2(0,T;C^{3,\alpha}(\bar{\Omega}))
	\\
& \quad \cap C([0,T];H^1(\Omega))\cap C([0,T];C^{0,\alpha}(\bar{\Omega})),
	\end{align*}

	\item[(ii)]  
$\partial_tu_\theta\in L^2(0,T;H^{-2}(\Omega))$.
	\item[(iii)] 
$u_\theta(x,0)=u_0(x)$, for all $x\in\Omega$.
	\item[(iv)] 
$u_\theta$ solves the PFC equation \qref{pfc-eqn1}--\qref{pfc-eqn2} in the following weak sense:
	\begin{align}
&\int_0^T\left\langle \partial_tu_\theta,\xi\right\rangle_{H^{-2}(\Omega),H^2(\Omega)}dt
	\nonumber
	\\ 
&=-\int_0^T\int_\Omega M_\theta(u_\theta)(W''(u_\theta)\nabla u_\theta + \kappa\nabla u_\theta + 2\kappa\nabla\Delta u_\theta + \kappa\nabla\Delta^2 u_\theta)\cdot\nabla\xi \;dxdt,
	\nonumber
	\\
& \mbox{ }
	\label{PFC-w2}
	\end{align}
for all $\xi\in L^2(0,T;H^2(\Omega))$. In addition, for any $t\geq 0$, the following energy inequality holds:
	\begin{align} 
& \int_\Omega W(u_\theta(x,t)) + \kappa\left(\frac{1}{2}|u_\theta(x,t)|^2 - |\nabla u_\theta(x,t)|^2 + \frac{1}{2}|\Delta u_\theta(x,t)|^2\right) dx  
	\nonumber
	\\ 
& \quad +\int_0^t\int_\Omega M_\theta( u_\theta(x,\tau)) \bigg|W''(u_\theta(x,\tau))\nabla u_\theta(x,\tau) 
	\nonumber
	\\
& \quad + \kappa\left(\nabla u_\theta(x,\tau) + 2\nabla\Delta u_\theta(x,\tau) + \nabla\Delta^2 u_\theta(x,\tau) \right)\bigg|^2 \; dxd\tau 
	\nonumber
	\\
&\leq\int_\Omega W(u_0) + \kappa\left(\frac{1}{2}u_0^2 - |\nabla u_0|^2 + \frac{1}{2}|\Delta u_0|^2\right) dx.
	\label{ener-ineq1}
	\end{align}
	\item[(v)]
If $u_0(x)>0$, for all $x\in\Omega$, then
	\begin{align}
	\label{bnd-neg1}
\mathop{\mathrm{ess\, sup}}_{0\leq t \leq T}\int_\Omega\left|(u_\theta(x,t))_-+\theta\right|^2 \; dx \leq C(\theta^2+\theta+\theta^{1/2}),
	\end{align}
where $(u_\theta)_-=\min\{u_\theta,0\}$, and $C$ is a generic positive constant that may depend on $d,T,\Omega$, $b_1, b_2, b_3, b_4$, $m, \kappa,\epsilon$ and $u_0$, but not on $\theta$.
	\end{enumerate}
	\end{thm}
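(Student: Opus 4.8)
The plan is a Faedo--Galerkin approximation, energy and regularity estimates, a compactness passage to the limit for (i)--(iv), and a separate logarithmic--entropy estimate for (v). For the Galerkin scheme I would take $\{w_k\}_{k\ge1}$ to be the $L^2(\Omega)$--orthonormal basis of $\Omega$--periodic eigenfunctions of $-\Delta$, set $V_n=\mathrm{span}\{w_1,\dots,w_n\}$ with $L^2$--orthogonal projection $P_n$ (stable in every $H^s$, commuting with $\Delta$), and look for $u_n=\sum_{k\le n}c_k^n(t)w_k$ with $u_n(0)=P_nu_0$ solving
\[
(\partial_tu_n,\phi)=-\big(M_\theta(u_n)\nabla\widetilde\omega_n,\nabla\phi\big)\quad\forall\,\phi\in V_n,\qquad \widetilde\omega_n:=P_nW'(u_n)+\kappa\big(u_n+2\Delta u_n+\Delta^2u_n\big).
\]
Projecting the nonlinearity is what makes the energy estimate clean: since $\widetilde\omega_n\in V_n$, choosing $\phi=\widetilde\omega_n$ gives $\tfrac{d}{dt}\cF(u_n)+\int_\Omega M_\theta(u_n)|\nabla\widetilde\omega_n|^2\,dx=0$ with no projection remainder. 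Continuity of $M_\theta,W',W''$ gives a local ODE solution by Cauchy--Peano, and this identity together with the coercivity $\cF(u)\ge c\big(\|u\|_{H^2}^2+\|u\|_{L^{2m}}^{2m}\big)-C$ extends it to $[0,T]$. The coercivity follows from \qref{grow-1}, the interpolation $\|\nabla u\|_{L^2}^2\le\delta\|\Delta u\|_{L^2}^2+C_\delta\|u\|_{L^2}^2$, and the fact that $b_1z^{2m}$ dominates $z^2$ since $m>1$; this is where the indefinite term $-\kappa|\nabla u|^2$ in $\cF$ is absorbed.

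The energy identity yields, uniformly in $n$, $\|u_n\|_{L^\infty(0,T;H^2)}\le C$ and $\int_0^T\!\!\int_\Omega M_\theta(u_n)|\nabla\widetilde\omega_n|^2\le C$; since $M_\theta\ge\theta$ this bounds $\nabla\widetilde\omega_n$ in $L^2(\Omega_T)$ (with a $\theta$--dependent constant). I would then bootstrap regularity: from $\kappa\nabla\Delta^2u_n=\nabla\widetilde\omega_n-(W''(u_n)+\kappa)\nabla u_n-2\kappa\nabla\Delta u_n$, using $\|u_n\|_{L^\infty(\Omega_T)}\le C$ (by $H^2\hookrightarrow L^\infty$, $d\le3$) and \qref{grow-3}, together with $\|u\|_{\dot H^3}\le C\|u\|_{\dot H^1}^{1/2}\|u\|_{\dot H^5}^{1/2}$ to absorb the $\Delta u_n$ term, one gets $\|u_n\|_{L^2(0,T;H^5)}\le C$; and $\partial_tu_n$ is bounded in $L^2(0,T;H^{-1})$ because $M_\theta(u_n)\le|u_n|+1\le C$. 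Aubin--Lions then gives, along a subsequence, $u_n\to u_\theta$ in $C([0,T];H^{2-\delta})\cap L^2(\Omega_T)$ and a.e., $u_n\rightharpoonup u_\theta$ in $L^2(0,T;H^5)$, $\partial_tu_n\rightharpoonup\partial_tu_\theta$ in $L^2(0,T;H^{-1})$, and $\widetilde\omega_n\rightharpoonup\omega:=W'(u_\theta)+\kappa(u_\theta+2\Delta u_\theta+\Delta^2u_\theta)$ in $L^2(0,T;H^1)$. Since $M_\theta(u_n)\to M_\theta(u_\theta)$ and $W''(u_n)\to W''(u_\theta)$ a.e. and boundedly, products such as $M_\theta(u_n)\nabla\widetilde\omega_n$ converge weakly to $M_\theta(u_\theta)\nabla\omega$, so $u_\theta$ satisfies \qref{PFC-w2}; (i)--(iii) follow from the above bounds, the embeddings $H^5\hookrightarrow C^{3,\alpha}$ and $H^2\hookrightarrow C^{0,\alpha}$ (valid for $\alpha<1/2\le2-d/2$ when $d\le3$), and $u_\theta\in C([0,T];[H^5,H^{-1}]_{1/2})=C([0,T];H^2)$. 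For \qref{ener-ineq1} one passes to the $\liminf$ in $\cF(u_n(t))+\int_0^t\!\int M_\theta(u_n)|\nabla\widetilde\omega_n|^2=\cF(P_nu_0)$, using weak lower semicontinuity of $\cF$ and of the dissipation (through weak convergence of $\sqrt{M_\theta(u_n)}\,\nabla\widetilde\omega_n$), and $\cF(P_nu_0)\to\cF(u_0)$.

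For (v), suppose $u_0>0$; then $\delta_0:=\min_{\overline\Omega}u_0>0$ since $u_0\in C(\overline\Omega)$. Define $\Phi_\theta\in C^2(\R)$, with globally bounded (hence Lipschitz) $\Phi_\theta'$, by $\Phi_\theta''=1/M_\theta$, normalised so that $\Phi_\theta'(s)=\ln s$ for $s>\theta$; then $\Phi_\theta\ge-C$, $\Phi_\theta(s)\ge\tfrac1{2\theta}(s-\theta)^2$ for $s\le\theta$, and for $\theta<\delta_0$ one has $u_0>\theta$, so $\int_\Omega\Phi_\theta(u_0)$ is bounded independently of $\theta$ (up to a logarithmic factor $\theta|\ln\theta|\lesssim\theta^{1/2}$). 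The regularity $u_\theta\in L^2(0,T;H^5)\cap L^\infty(0,T;H^2)$ together with $d\le3$ makes $\Phi_\theta'(u_\theta)$ an admissible test function in \qref{PFC-w2} on $[0,t]$; since $\nabla\Phi_\theta'(u_\theta)=\tfrac{1}{M_\theta(u_\theta)}\nabla u_\theta$ the mobility cancels, and a chain rule plus integration by parts on the periodic domain give
\[
\int_\Omega\Phi_\theta(u_\theta(t))\,dx+\int_0^t\!\!\int_\Omega\big[(W''(u_\theta)+\kappa)|\nabla u_\theta|^2-2\kappa|\Delta u_\theta|^2+\kappa|\nabla\Delta u_\theta|^2\big]\,dx\,d\tau=\int_\Omega\Phi_\theta(u_0)\,dx.
\]
The first two terms under the time integral are $O\!\big(T\,\|u_\theta\|_{L^\infty(0,T;H^2)}^2\big)$ by \qref{grow-3} and $\|u_\theta\|_{L^\infty(\Omega_T)}\le C$, while the third is $\ge0$; hence $\int_\Omega\Phi_\theta(u_\theta(t))\le C$ uniformly in $\theta$ (small) and $t$. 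Restricting to $\{u_\theta(t)\le\theta\}$ and using the quadratic lower bound gives $\int_{\{u_\theta(t)\le\theta\}}(u_\theta(t)-\theta)^2\le C\theta$; splitting $\Omega$ into $\{u_\theta>0\}$ and $\{u_\theta\le0\}$, where $|(u_\theta)_-+\theta|^2\le2(u_\theta-\theta)^2+8\theta^2$, then yields \qref{bnd-neg1}.

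The crux is part (v), and the main obstacles I foresee are: (a) the indefiniteness of $\cF$, handled by the interpolation/super-quadratic-growth argument above, which is needed both for the global ODE bound and for $H^2$--coercivity; (b) the mobility $M_\theta$ being \emph{unbounded above}, which forces every bound involving $M_\theta(u_n)$ to go through $M_\theta(u_n)\le|u_n|+1$ and $H^2(\Omega)\hookrightarrow L^\infty(\Omega)$ --- the reason $d\le3$ enters; and (c) verifying that $\Phi_\theta'(u_\theta)\in L^2(0,T;H^2)$, which is why the $L^2(0,T;H^5)$ regularity (hence $\nabla u_\theta\in L^\infty(0,T;L^6)$ and $|\nabla u_\theta|^2\in L^2(\Omega_T)$) must be established first, and keeping every constant in \qref{bnd-neg1} independent of $\theta$, the logarithmic factors $\theta|\ln\theta|,\ \theta\ln^2\theta\lesssim\theta^{1/2}$ being the origin of the $\theta^{1/2}$ term there.
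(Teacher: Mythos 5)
Your proposal follows the paper's strategy almost exactly: the same Galerkin scheme with eigenfunctions of $-\Delta$ and the projected chemical potential, the same energy identity and coercivity argument for the uniform $L^\infty(0,T;H^2)$ bound, the same $\theta$--dependent bootstrap to $L^2(0,T;H^5)$ via $\|\nabla\omega\|_{L^2(\Omega_T)}\le C\theta^{-1/2}$, Aubin--Lions compactness, and the same entropy functional $\Phi_\theta$ with $\Phi_\theta''=1/M_\theta$ for part (v). Parts (i)--(iv) are essentially identical in substance to the paper's proof.

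The one genuinely different step is the final estimate in (v). After the entropy identity
$\int_\Omega\Phi_\theta(u_\theta(t))-\int_\Omega\Phi_\theta(u_0)=-\int_0^t\!\int_\Omega\nabla\omega_\theta\cdot\nabla u_\theta$,
the paper bounds the right-hand side crudely by Cauchy--Schwarz, $\|\nabla\omega_\theta\|_{L^2(\Omega_T)}\|\nabla u_\theta\|_{L^2(\Omega_T)}\le C\theta^{-1/2}$, so its entropy bound degenerates as $\theta\to0$ and is rescued only by the prefactor $2\theta$ in $(z+\theta)^2\le2\theta\Phi_\theta(z)$; this is where the paper's $\theta^{1/2}$ term comes from (not from logarithmic factors, as you suggest). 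You instead integrate by parts to write $\int\nabla\omega_\theta\cdot\nabla u_\theta=\int\big[(W''(u_\theta)+\kappa)|\nabla u_\theta|^2-2\kappa|\Delta u_\theta|^2+\kappa|\nabla\Delta u_\theta|^2\big]$, whose negative part is controlled by the $\theta$--uniform $L^\infty(0,T;H^2)$ bound alone; this yields $\int_\Omega\Phi_\theta(u_\theta(t))\le C$ uniformly in $\theta$ and hence the sharper estimate $C(\theta^2+\theta)$, which implies \qref{bnd-neg1} a fortiori. Your version is cleaner and avoids routing the estimate through the $\theta$--degenerate dissipation bound. Two minor points to tighten: the chain rule $\frac{d}{dt}\int_\Omega\Phi_\theta(u_\theta)=\langle\partial_tu_\theta,\Phi_\theta'(u_\theta)\rangle$ needs the justification the paper supplies (mollification of $\Phi_\theta$, which is only $C^{1,1}$ in its second derivative, plus a Steklov time--averaging argument), and your restriction to $\theta<\delta_0$ is unnecessary since $0\le\Phi_\theta\le\Phi$ on $(0,\infty)$ gives $\int_\Omega\Phi_\theta(u_0)\le\int_\Omega\Phi(u_0)$ for every $\theta\in(0,1)$.
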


Next, we consider the limit of $\{u_\theta\}$ as $\theta\to  0$. 
We show that the limiting function $u$ of $\{u_\theta\}$ exists and is a weak solution to the PFC equation \qref{pfc-eqn1}--\qref{pfc-eqn2} with the mobility $M(u)$ defined by \qref{M(u)}. 
Moreover, if the initial data $u_0(x)$ is positive in $\Omega$, using the estimate \qref{bnd-neg1} in part (v) of Theorem \ref{thm-main1}, we can show that such weak solution is nonnegative in $\Omega_T$. 
We will prove the following result.

\begin{thm}\label{thm-main2}
	Assume that $u_0\in H^2(\Omega)$. 
	For any given constant $T>0$, there exists a function $u$ that satisfies the following conditions:
	
	\begin{enumerate}
	\item[(i)] 
For any $0<\alpha<1/2$,
	\[
u\in L^\infty(0,T;H^2(\Omega))\cap C([0,T];H^1(\Omega))\cap C([0,T];C^{0,\alpha}(\bar{\Omega})).
	\] 
	\item[(ii)] 
$\partial_tu\in L^2(0,T;H^{-2}(\Omega))$.
	\item[(iii)] 
$u(x,0)=u_0$.
	\item[(iv)] 
$u$ can be considered as a weak solution for the PFC equation \qref{pfc-eqn1}--\qref{pfc-eqn2} in the following weak sense:
	\begin{enumerate}
	\item[(a)] 
Let $P$ be the set where $M(u)$ is not degenerate, that is,
	\begin{align}
P:=\{(x,t)\in\Omega_T:u(x,t)>0\}.
	\end{align}
There exist a set $B\subset\Omega_T$ with $|\Omega_T\backslash B|=0$ and a function $\Psi:\Omega_T\to \R^d$ satisfying $\chi_{B\cap P}M(u)\Psi\in L^2(0,T;L^{2d/(d+2)}(\Omega;\R^d))$, 
where $\chi_{B\cap P}$ is the characteristic function of $B\cap P$, such that
	\begin{align}
	\label{PFC-w3}
\int_0^T\left\langle\partial_tu,\xi\right\rangle_{H^{-2}(\Omega),H^2(\Omega)}dt=-\int_{B\cap P}M(u)\Psi\cdot\nabla\xi dxdt,
	\end{align}
for all $\xi\in L^2(0,T;H^2(\Omega))$.
	\item[(b)] 
Let  $\nabla\Delta^2u$ be the generalized derivative of $u$ in terms of distributions.  If $\nabla\Delta^2u\in L^q(U_T)$, where $U_T=U\times(0,T)$, 
for some open set $U\subset\Omega$ and some $q>1$ that may depend on $U$, then we have
	\begin{align*}
\Psi=W''(u)\nabla u + \kappa\nabla u + 2\kappa\nabla\Delta u + \kappa\nabla\Delta^2 u \quad \text{in } U_T.
	\end{align*}
	\item[(c)] 
For any $t\geq 0$, the following energy inequality holds:
	\begin{align}
&W(u(x,t)) + \kappa\left(\frac{1}{2}|u(x,t)|^2 - |\nabla u(x,t)|^2 + \frac{1}{2}|\Delta u(x,t)|^2\right)dx 
	\nonumber
	\\ 
& \quad +\int_{\Omega_t\cap B\cap P} M(u(x,\tau))|\Psi(x,\tau)|^2dxd\tau 
	\nonumber
	\\ 
&\leq\int_\Omega W(u_0) + \kappa\left(\frac{1}{2}u_0^2 - |\nabla u_0|^2 + \frac{1}{2}|\Delta u_0|^2\right) dx.
	\label{ener-ineq2}
	\end{align}
	\end{enumerate}
	\item[(v)] 
In addition, if $u_0(x)>0$, for all $x\in\Omega$, then $u(x,t)\geq 0$, for a.e. $x\in\Omega$ and all $t\in[0,T]$, 
		and $u(x,t)$ is not always zero in $\Omega_T$.
	\end{enumerate}
\end{thm}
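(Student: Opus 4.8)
I would obtain $u$ as a subsequential limit of the regularized solutions $u_\theta$ from Theorem~\ref{thm-main1} as $\theta\to0^+$, using that the right-hand side of the energy inequality \qref{ener-ineq1} is independent of $\theta$. Discarding the nonnegative dissipation term in \qref{ener-ineq1} gives $\cF(u_\theta(\cdot,t))\le\cF(u_0)$ for all $t\ge0$, where $\cF$ is the energy in \qref{pfc-ener} (finite since $u_0\in H^2(\Omega)\hookrightarrow L^\infty(\Omega)$ because $d\le3$). Using the coercivity in \qref{grow-1}, the interpolation bound $\|\nabla v\|_{L^2(\Omega)}^2\le\tfrac14\|\Delta v\|_{L^2(\Omega)}^2+C\|v\|_{L^2(\Omega)}^2$, H\"older's inequality on the bounded domain $\Omega$, and Young's inequality (recalling $2m>2$), one gets $\cF(v)\ge c\|v\|_{H^2(\Omega)}^2-C$, so $\{u_\theta\}$ is bounded in $L^\infty(0,T;H^2(\Omega))$ uniformly in $\theta$. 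With $\mathbf J_\theta:=W''(u_\theta)\nabla u_\theta+\kappa(\nabla u_\theta+2\nabla\Delta u_\theta+\nabla\Delta^2u_\theta)$, the dissipation term in \qref{ener-ineq1} bounds $\{M_\theta(u_\theta)^{1/2}\mathbf J_\theta\}$ in $L^2(\Omega_T;\R^d)$; since $H^2(\Omega)\hookrightarrow L^\infty(\Omega)$, the factor $M_\theta(u_\theta)=\max\{u_\theta,\theta\}\le\theta+\|u_\theta\|_{L^\infty(\Omega)}$ is uniformly bounded, so $M_\theta(u_\theta)\mathbf J_\theta=M_\theta(u_\theta)^{1/2}\cdot\bigl(M_\theta(u_\theta)^{1/2}\mathbf J_\theta\bigr)$ is bounded in $L^2(\Omega_T;\R^d)$, whence via \qref{PFC-w2} the family $\{\partial_tu_\theta\}$ is bounded in $L^2(0,T;H^{-2}(\Omega))$. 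By the Aubin--Lions--Simon lemma (using $H^2(\Omega)\hookrightarrow\hookrightarrow H^1(\Omega)$ and $H^2(\Omega)\hookrightarrow\hookrightarrow C^{0,\alpha}(\bar\Omega)$ for $\alpha<1/2$, both valid since $d\le3$), along a subsequence $u_\theta\rightharpoonup^*u$ in $L^\infty(0,T;H^2(\Omega))$, $u_\theta\to u$ strongly in $C([0,T];H^1(\Omega))\cap C([0,T];C^{0,\alpha}(\bar\Omega))$, $\partial_tu_\theta\rightharpoonup\partial_tu$ in $L^2(0,T;H^{-2}(\Omega))$, and (further subsequence) $M_\theta(u_\theta)^{1/2}\mathbf J_\theta\rightharpoonup\mathbf g$ in $L^2(\Omega_T;\R^d)$; together with $u_\theta(\cdot,0)=u_0$ this establishes (i)--(iii).

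For (iv), note first that $u_\theta\to u$ uniformly, $M_\theta$ is $1$-Lipschitz, $M_\theta(s)\to M(s)$, and $0\le M_\theta(s)\le\theta+|s|$, so $M_\theta(u_\theta)^{1/2}\to M(u)^{1/2}$ a.e.\ in $\Omega_T$ and hence, being uniformly bounded, strongly in each $L^p(\Omega_T)$, $p<\infty$. By the standard lemma on products of an $L^\infty$-bounded a.e.-convergent sequence with a weakly $L^2$-convergent one, $M_\theta(u_\theta)\mathbf J_\theta\rightharpoonup M(u)^{1/2}\mathbf g$ in $L^2(\Omega_T;\R^d)$. Set $\Psi:=\mathbf g/M(u)^{1/2}$ on $P$ and $\Psi:=0$ off $P$, and let $B$ be the full-measure subset of $\Omega_T$ on which the above a.e.\ convergences hold and $\Psi$ is finite; then $\chi_{B\cap P}M(u)\Psi=\chi_PM(u)^{1/2}\mathbf g\in L^2(\Omega_T;\R^d)\subset L^2(0,T;L^{2d/(d+2)}(\Omega;\R^d))$. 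Passing to the limit in \qref{PFC-w2} — the left side by weak convergence of $\partial_tu_\theta$ tested against $\xi\in L^2(0,T;H^2(\Omega))$, the right side because $M_\theta(u_\theta)\mathbf J_\theta\rightharpoonup M(u)^{1/2}\mathbf g$, which vanishes off $P$ — yields \qref{PFC-w3}, proving (iv)(a). For (iv)(b): on $\{u>\delta\}\cap U_T$ one has $M_\theta(u_\theta)=u_\theta\ge\delta/2$ for small $\theta$, so $\mathbf J_\theta$ is bounded in $L^2$ there, while $u_\theta\to u$ in $\mathcal{D}'(\Omega_T)$ forces its weak $L^2$ limit to equal $\mathbf J:=W''(u)\nabla u+\kappa(\nabla u+2\nabla\Delta u+\nabla\Delta^2u)$; hence $\mathbf g=M(u)^{1/2}\mathbf J$, i.e.\ $\Psi=\mathbf J$, on $\{u>\delta\}\cap U_T$, and letting $\delta\to0$ identifies $\Psi$ with $\mathbf J$ on $P\cap U_T$, which we adopt as the definition of $\Psi$ on $U_T$.

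For (iv)(c), the uniform and $H^1$ convergences give $\int_\Omega W(u_\theta(x,t))\,dx\to\int_\Omega W(u(x,t))\,dx$, $\int_\Omega|u_\theta(x,t)|^2\,dx\to\int_\Omega|u(x,t)|^2\,dx$ and $\int_\Omega|\nabla u_\theta(x,t)|^2\,dx\to\int_\Omega|\nabla u(x,t)|^2\,dx$ for each $t$, while $\int_\Omega|\Delta u(x,t)|^2\,dx\le\liminf_\theta\int_\Omega|\Delta u_\theta(x,t)|^2\,dx$ by weak lower semicontinuity; also $\int_{\Omega_t\cap B\cap P}M(u)|\Psi|^2\,dxd\tau=\int_{\Omega_t\cap P}|\mathbf g|^2\,dxd\tau\le\|\mathbf g\|_{L^2(\Omega_t)}^2\le\liminf_\theta\int_{\Omega_t}M_\theta(u_\theta)|\mathbf J_\theta|^2\,dxd\tau$. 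Adding these, using $\liminf a_\theta+\liminf b_\theta\le\liminf(a_\theta+b_\theta)$ and \qref{ener-ineq1}, yields \qref{ener-ineq2}. Finally, for (v), assume $u_0>0$ on $\Omega$; since $s\mapsto s_-$ is $1$-Lipschitz, $(u_\theta(\cdot,t))_-+\theta\to(u(\cdot,t))_-$ uniformly on $\Omega$ for each $t$, and since \qref{bnd-neg1} holds for every $t$ (the integral there is continuous in $t$), $\int_\Omega|(u(x,t))_-|^2\,dx=\lim_\theta\int_\Omega|(u_\theta(x,t))_-+\theta|^2\,dx\le\lim_\theta C(\theta^2+\theta+\theta^{1/2})=0$, so $u(\cdot,t)\ge0$ a.e.\ for all $t\in[0,T]$; testing \qref{pfc-eqn1} for $u_\theta$ with the constant function gives conservation of mass $\int_\Omega u_\theta(x,t)\,dx=\int_\Omega u_0\,dx$, and in the limit $\int_\Omega u(x,t)\,dx=\int_\Omega u_0\,dx>0$, so $u$ is not identically zero in $\Omega_T$.

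The main obstacle is the identification of the limiting flux in part (iv): the degeneracy of $M$ means that only $M_\theta(u_\theta)^{1/2}\mathbf J_\theta$, and not $\mathbf J_\theta$ itself, is uniformly bounded, so the flux potential $\Psi$ can be recovered only on the non-degenerate set $P$; one must then carefully keep track of the exceptional null set $B$, of the precise integrability $L^2(0,T;L^{2d/(d+2)}(\Omega))$, and of the conditional identification in (iv)(b). This is exactly where the framework of \cite{Elliott-CH-sln,DaiCH-sln,LuongFCH-sln} is essential.
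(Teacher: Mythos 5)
Your proposal is correct, and it follows the same overall skeleton as the paper (vanishing-$\theta$ limit of the solutions from Theorem~\ref{thm-main1}, $\theta$-uniform bounds from \qref{ener-ineq1}, Aubin--Lions compactness, weak lower semicontinuity for \qref{ener-ineq2}, and the estimate \qref{bnd-neg1} for nonnegativity). Where you genuinely diverge is in the construction of $\Psi$ and the limit passage in part (iv), which is the heart of the theorem. The paper never divides by $\sqrt{M(u)}$: it invokes Egorov's theorem to produce an exhausting family $B_j$ of sets of uniform convergence, intersects with the nested sets $P_j=\{u>\delta_j\}$, extracts on each $B_j\cap P_j$ a weak $L^2$ limit $\Psi_j$ of the full gradients $\nabla\omega_i$ (which are bounded there because $M_i(u_i)\ge\delta_j/2$), glues these by a diagonal argument into $\Psi$ on $B\cap P$, and separately shows the contribution of $B_j\setminus P_j$ vanishes. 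You instead take the single weak $L^2(\Omega_T)$ limit $\mathbf g$ of $\sqrt{M_\theta(u_\theta)}\nabla\omega_\theta$ and define $\Psi:=\mathbf g/\sqrt{M(u)}$ on $P$, observing that $\sqrt{M(u)}\,\mathbf g$ vanishes identically off $P$ so that the limiting flux identity localizes to $B\cap P$ for free; the three-way splitting, Egorov sets, and diagonal extraction all disappear. This is legitimate here precisely because $u_\theta\to u$ uniformly on $\overline{\Omega}\times[0,T]$ (so $\sqrt{M_\theta(u_\theta)}\to\sqrt{M(u)}$ uniformly and the strong-times-weak product lemma applies), and one can check the two constructions produce the same $\Psi$ on each $B_j\cap P_j$, since there $\Lambda=\sqrt{M(u)}\,\Psi_j$ with $\sqrt{M(u)}\ge\sqrt{\delta_j}>0$. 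What the paper's heavier machinery buys is robustness: it works verbatim in the Elliott--Garcke/Dai--Du setting where only a.e.\ (not uniform) convergence of $u_\theta$ is available, whereas your shortcut exploits the extra regularity specific to this problem. Your identification in (iv)(b) via distributional convergence of $\nabla\omega_\theta$ on $\{u>\delta\}\cap U_T$, your mass-conservation argument for $u\not\equiv 0$, and your pointwise-in-$t$ use of \qref{bnd-neg1} are all sound and consistent with the paper's conclusions.
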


\subsection{Notation}
In this paper, we use $C$ to denote a generic positive constant that may depend on $d,T,\Omega$, $b_1, b_2, b_3, b_4$, $m, \kappa,\epsilon$ and $u_0$, but nothing else, in particular not on $\theta$ and $N$. 
We also use $C_\theta$ to denote a generic positive constant that may depend on $d,T,\Omega$, $b_1, b_2, b_3, b_4$, $m, \kappa,\epsilon,u_0$ and $\theta$, but not on $N$.

The reamainder of this paper is organized as follows. 
In Section \ref{pos-mobi}, we prove Theorem \ref{thm-main1} using the Galerkin approximation. 
In Section \ref{dege-mobi}, we prove Theorem \ref{thm-main2}, which shows the existence of a weak solution to the PFC equation \qref{pfc-eqn1}--\qref{pfc-eqn2} with the degenerate mobility \qref{M(u)}.

\section{Proof of Theorem \ref{thm-main1}: Weak solution for the positive mobility case} \label{pos-mobi}
In this section we prove the existence of a weak solution to the PFC equation \qref{pfc-eqn1}--\qref{pfc-eqn2} 
with the nondegenerate mobility $M_\theta(u)$ defined by \qref{M-theta}.

\subsection{Galerkin approximation} \label{Galerkin}
Let $\{\phi_j:j=1,2,...\}$ be the normalized eigenfunctions in $L^2(\Omega)$, 
that is, $\|\phi_j\|_{L^2(\Omega)}=1$, of the eigenvalue problem:
\begin{align*}
	-\Delta u=\lambda u \text{ in } \Omega, \text{ and } 
	u \text{ is } \Omega\text{--periodic}. 
\end{align*} 
The eigenfunctions $\phi_j$ are orthogonal with respect to the $H^2(\Omega)$ and $L^2(\Omega)$ inner products. 
We assume that $\lambda_1=0$, which implies that $\phi_1(x)\equiv(2\pi)^{-d/2}$.

We consider the Galerkin approximation for the PFC equation \qref{pfc-eqn1}--\qref{pfc-eqn2}:
\begin{align}
	u^N(x,t)&=\sum_{j=1}^{N}c^N_j(t)\phi_j(x),\quad
	\omega^N(x,t)=\sum_{j=1}^{N}d^N_j(t)\phi_j(x),	\\	
	\label{ode1}\int_\Omega\partial_tu^N\phi_jdx&=-\int_\Omega M_\theta(u^N)\nabla\omega^N\cdot\nabla\phi_jdx \\	
	\label{ode2}\int_\Omega\omega^N\phi_jdx&=\int_\Omega(W'(u^N)\phi_j+\kappa u^N\phi_j-2\kappa\nabla u^N\cdot\nabla\phi_j+\kappa\Delta u^N \Delta\phi_j)dx \\	
	\label{ode3}u^N(x,0)&=\sum_{j=1}^{N}\left(\int_\Omega 	u_0\phi_jdx\right)\phi_j(x).
\end{align}
This system is equivalent to the following system of ordinary differential equations for $c^N_1,...,c^N_N$:
\begin{align}
	\label{ode4}\partial_tc_j^N=&-\sum_{k=1}^{N}d_k^N\int_\Omega M_\theta\left(\sum_{i=1}^{N}c^N_i\phi_i\right) \nabla\phi_k\cdot\nabla\phi_jdx, \\	
	\label{ode5}d_j^N=&\int_\Omega W'\left(\sum_{k=1}^{N}c^N_k\phi_k\right)\phi_jdx + \kappa c_j^N - 2\lambda_jc_j^N + \kappa\lambda_j^2 c_j^N, \\	
	\label{ode6}c_j^N(0)=&\int_\Omega u_0\phi_jdx.
\end{align}
Because the right hand side of \qref{ode4} depends continuously on $c^N_1,...,c^N_N$, 
according to the Peano existence theorem \cite{Peano1, Peano2}, 
the initial value problem \qref{ode4}--\qref{ode6} has a local solution. 
Lemma \ref{bnd-lem} gives the uniform bound for $c^N_1,...,c^N_N$, 
therefore, by the Picard--Lindel\"{o}f theorem (see \cite{CoddingtonODE}, page 12, Theorem 3.1), 
a global solution for the initial value problem \qref{ode4}--\qref{ode6} exists.

	\subsection{A Priori Estimates}
	\label{1st-estimate}
We will establish some necessary prior estimates in this section.
	\begin{lemma}
	\label{ini-lem}
Let $u^N$ be a solution of the system \qref{ode1}--\qref{ode3}. Then, for any $0\leq t\leq T$,
	\begin{align}
\int_\Omega u^N(x,t)dx=\int_\Omega u^N(x,0)dx.
	\end{align}
	\end{lemma}

	\begin{proof}
Taking $j=1$ in \qref{ode1} gives $\int_\Omega\partial_tu^Ndx=0$. 
Thus, $\int_\Omega u^N(x,t)dx=\int_\Omega u^N(x,0)dx$, for any $0\leq t\leq T$.
	\end{proof}

\begin{lemma}\label{bnd-lem}
	Let $u^N$ be a solution of the system \qref{ode1}--\qref{ode3}, then, for any $0\leq t\leq T$,
	\begin{align}
		\label{bnd-uN-Linf-H2}\|u^N\|_{L^\infty(0,T;H^2(\Omega))}&\leq C, \\
		\label{bnd-M-omN}\int_0^t\int_\Omega M_\theta(u^N(x,\tau))|\nabla\omega^N(x,\tau)|^2dxd\tau&\leq C.
	\end{align}
\end{lemma}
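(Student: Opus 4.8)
The plan is to run the standard energy estimate for the conserved gradient flow, working entirely at the finite–dimensional Galerkin level, where $u^N$ and $\omega^N$ are smooth enough in time (recall $c_j^N\in C^1$ and $W\in C^2$) that the chain rule applies without any regularization. First I would test the discrete mass equation against $\omega^N$: multiplying \qref{ode1} by $d_j^N$ and summing over $j=1,\dots,N$, and using $\omega^N=\sum_j d_j^N\phi_j$, gives
\[
\int_\Omega\partial_t u^N\,\omega^N\,dx=-\int_\Omega M_\theta(u^N)|\nabla\omega^N|^2\,dx.
\]
Then I would test the discrete chemical–potential equation against $\partial_t u^N$: multiplying \qref{ode2} by $\partial_t c_j^N$ and summing over $j$, and using $\partial_t u^N=\sum_j(\partial_t c_j^N)\phi_j$, the left side is again $\int_\Omega\partial_t u^N\,\omega^N\,dx$, while the right side is exactly $\frac{d}{dt}\cF(u^N)$ with $\cF$ as in \qref{pfc-ener}. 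Equating the two yields the dissipation identity $\frac{d}{dt}\cF(u^N)+\int_\Omega M_\theta(u^N)|\nabla\omega^N|^2\,dx=0$; integrating in $t$ gives, for every $0\le t\le T$,
\[
\cF(u^N(\cdot,t))+\int_0^t\int_\Omega M_\theta(u^N(\cdot,\tau))|\nabla\omega^N(\cdot,\tau)|^2\,dxd\tau=\cF(u^N(\cdot,0)).
\]

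Next I would bound the initial energy $\cF(u^N(\cdot,0))$ uniformly in $N$. Since $u^N(\cdot,0)$ is the $L^2(\Omega)$–orthogonal projection of $u_0$ onto $\mathrm{span}\{\phi_1,\dots,\phi_N\}$ and the $\phi_j$ are orthogonal in both $L^2(\Omega)$ and $H^2(\Omega)$, one has $\|u^N(\cdot,0)\|_{H^2(\Omega)}\le\|u_0\|_{H^2(\Omega)}$. The quadratic terms of $\cF(u^N(\cdot,0))$ are then $\le C\|u_0\|_{H^2(\Omega)}^2$, and the $W$–term is controlled via the upper bound in \qref{grow-1} together with the Sobolev embedding $H^2(\Omega)\hookrightarrow L^{2m}(\Omega)$ (valid for $1\le d\le3$), so $\cF(u^N(\cdot,0))\le C$.

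The remaining — and only slightly delicate — point is coercivity of $\cF$, since the term $-\kappa|\nabla u|^2$ in \qref{pfc-ener} has the ``wrong'' sign. Using the lower bound in \qref{grow-1}, $W(z)\ge b_1 z^{2m}-b_2-\frac{\epsilon}{2}z^2$, I would write
\[
\cF(u^N)\ge\int_\Omega\Big(b_1|u^N|^{2m}+\tfrac{\kappa-\epsilon}{2}|u^N|^2-\kappa|\nabla u^N|^2+\tfrac{\kappa}{2}|\Delta u^N|^2\Big)dx-b_2|\Omega|,
\]
and absorb the gradient term through the periodic interpolation $\|\nabla u^N\|_{L^2(\Omega)}^2=\int_\Omega u^N(-\Delta u^N)\,dx\le\tfrac14\|\Delta u^N\|_{L^2(\Omega)}^2+\|u^N\|_{L^2(\Omega)}^2$, which leaves $\tfrac{\kappa}{4}\|\Delta u^N\|_{L^2(\Omega)}^2$ plus a multiple of $\|u^N\|_{L^2(\Omega)}^2$; the latter is in turn dominated by $\tfrac{b_1}{2}\|u^N\|_{L^{2m}(\Omega)}^{2m}$ plus a constant, since $2m>2$ (Hölder and Young). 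Hence $\cF(u^N)\ge\tfrac{b_1}{2}\|u^N\|_{L^{2m}(\Omega)}^{2m}+\tfrac{\kappa}{4}\|\Delta u^N\|_{L^2(\Omega)}^2-C$. Combined with the energy identity and the initial bound, this gives $\|\Delta u^N(\cdot,t)\|_{L^2(\Omega)}\le C$ and $\|u^N(\cdot,t)\|_{L^{2m}(\Omega)}\le C$ uniformly in $t$ and $N$; then $\|u^N(\cdot,t)\|_{L^2(\Omega)}\le C$ by Hölder and $\|\nabla u^N(\cdot,t)\|_{L^2(\Omega)}^2\le\|u^N(\cdot,t)\|_{L^2(\Omega)}\|\Delta u^N(\cdot,t)\|_{L^2(\Omega)}\le C$, so \qref{bnd-uN-Linf-H2} follows. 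Finally, since $\cF(u^N(\cdot,t))\ge-C$, the same energy identity yields $\int_0^t\int_\Omega M_\theta(u^N)|\nabla\omega^N|^2\,dxd\tau\le\cF(u^N(\cdot,0))+C\le C$, which is \qref{bnd-M-omN}. The main obstacle is exactly this coercivity step; everything else is routine bookkeeping.
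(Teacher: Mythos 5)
Your proof is correct and follows essentially the same route as the paper: derive the Galerkin energy dissipation identity, bound the initial energy via \qref{grow-1} and the $H^2$ Sobolev embedding, and establish coercivity of $\cF$ by absorbing $-\kappa\|\nabla u^N\|_{L^2}^2$ through integration by parts and controlling the resulting $\|u^N\|_{L^2}^2$ term with the $b_1|u^N|^{2m}$ term. The only difference is cosmetic bookkeeping in the coercivity step (you use H\"older/Young on the $L^{2m}$ norm where the paper uses an AM--GM inequality and a carefully chosen splitting parameter $\beta_0\in(\kappa/(2\kappa-\epsilon),1)$), and both yield the same lower bound $\cF(u^N)\ge \gamma_0(\|u^N\|_{L^2}^2+\|\Delta u^N\|_{L^2}^2)-C$.
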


\begin{proof}
For any $0\leq t\leq T$, since
\begin{align*}
	\frac{d}{dt}\cF(u^N(x,t))=-\int_\Omega M_\theta(u^N(x,t))|\nabla\omega^N(x,t)|^2dx,
\end{align*}
integrating in time over $(0,t)$, we obtain
\begin{align} \label{ener-id}
	&\int_\Omega W(u^N(x,t)) + \kappa\left(\frac{1}{2}|u^N(x,t)|^2 - |\nabla u^N(x,t)|^2 + \frac{1}{2}|\Delta u^N(x,t)|^2\right)dx \nonumber \\ 
	&+\int_{0}^{t}\int_\Omega M_\theta(u^N(x,\tau))|\nabla\omega^N(x,\tau)|^2dxd\tau \nonumber \\ 
	&=\int_\Omega W(u^N(x,0)) + \kappa\left(\frac{1}{2}|u^N(x,0)|^2 - |\nabla u^N(x,0)|^2 + \frac{1}{2}|\Delta u^N(x,0)|^2\right)dx.
\end{align}
Using the growth condition \qref{grow-1} and Sobolev embedding for $H^2(\Omega)$ in $\R^d (d=1,2,3)$, 
we obtain the bound for the right hand side of \qref{ener-id}:
\begin{align} \label{ener-rhs}
	&\int_\Omega W(u^N(x,0)) + \kappa\left(\frac{1}{2}|u^N(x,0)|^2 - |\nabla u^N(x,0)|^2 + \frac{1}{2}|\Delta u^N(x,0)|^2\right)dx \nonumber \\ 
	&\leq \int_\Omega \left(b_3|u^N(x,0)|^{2m} + b_4 +  \frac{\kappa-\epsilon}{2}|u^N(x,0)|^2 - \kappa|\nabla u^N(x,0)|^2 + \frac{\kappa}{2}|\Delta u^N(x,0)|^2\right)dx  \nonumber \\ 
	&\leq C\left(\|u^N(x,0)\|_{H^2(\Omega)}^{2m} + \|u^N(x,0)\|_{H^2(\Omega)}^2 + 1\right)dx \nonumber \\ 
	&\leq C\left(\|u_0\|_{H^2(\Omega)}^{2m} + \|u_0\|_{H^2(\Omega)}^2 + 1\right)  \nonumber \\ 
	&\leq C.
\end{align}

Now, let us consider the first integral on the left hand side of \qref{ener-id}. 
For any number $a>0$, applying the AM–GM inequality (see \cite{Cauchy}, page 457, Theorem 17), we obtain 
\begin{align}\label{am-gm}
	b_1|u^N|^{2m} + (m-1)a \geq m(b_1|u^N|^{2m}a^{m-1})^{1/m} = m b_1^{1/m} a^{(m-1)/m}|u^N|^2.	
\end{align}
We choose the number $a$ so that $m b_1^{1/m} a^{(m-1)/m} = \kappa/2$,
that is,
\begin{align}
	a = a_0 := \left(\frac{\kappa}{2m}\right)^{m/(m-1)}b_1^{1/(m-1)}.
\end{align}
So, with the choice $a = a_0$, \qref{am-gm} implies
\begin{align}\label{ener-lhs1}
	\int_\Omega b_1|u^N|^{2m} dx \geq \int_\Omega \frac{\kappa}{2}|u^N|^2 dx - (m-1)a_0|\Omega|.
\end{align}
In addition, using integration by parts and Young's inequality, for any number $\beta>0$, we obtain
\begin{align}\label{ener-lhs2}
	-\kappa\int_\Omega |\nabla u^N|^2 dx = \kappa\int_\Omega u^N\Delta u^N dx \geq -\frac{\kappa}{2\beta}\int_\Omega |u^N|^2 dx - \frac{\kappa\beta}{2}\int_\Omega |\Delta u^N|^2 dx.
\end{align}
Combining \qref{grow-1}, \qref{ener-lhs1} and \qref{ener-lhs2} we obtain
\begin{align}
	&\int_\Omega W(u^N) + \kappa\left(\frac{1}{2}|u^N|^2 - |\nabla u^N|^2 + \frac{1}{2}|\Delta u^N|^2\right)dx  \nonumber \\ 
	&\geq \int_\Omega \left(b_1|u^N|^{2m} - b_2 + \frac{\kappa-\epsilon}{2}|u^N|^2 - \kappa|\nabla u^N|^2 + \frac{\kappa}{2}|\Delta u^N|^2\right)dx  \nonumber \\ 
	&\geq \left(\kappa - \frac{\epsilon}{2} - \frac{\kappa}{2\beta}\right) \int_\Omega|u^N|^2 dx + \frac{\kappa}{2}(1-\beta)\int_\Omega|\Delta u^N|^2 dx - (b_2 + (m-1)a_0)|\Omega|.
\end{align}
We choose the number $\beta$ so that $\kappa - \frac{\epsilon}{2} - \frac{\kappa}{2\beta} > 0$ and $1-\beta > 0$, 
that is, $\frac{\kappa}{2\kappa-\epsilon} < \beta$ and $\beta < 1$. 
Since $0<\epsilon<\kappa$, then $\frac{\kappa}{2\kappa-\epsilon} < 1$, 
hence, we can always find $\beta_0$ such that $\frac{\kappa}{2\kappa-\epsilon} < \beta_0 < 1$. 
Define
\begin{align*}
	\gamma_0 := \min \left\lbrace \kappa - \frac{\epsilon}{2} - \frac{\kappa}{2\beta_0}, \frac{\kappa}{2}(1-\beta_0) \right\rbrace >0, 
\end{align*}
then we have
\begin{align}\label{ener-lhs3}
	&\int_\Omega W(u^N) + \kappa\left(\frac{1}{2}|u^N|^2 - |\nabla u^N|^2 + \frac{1}{2}|\Delta u^N|^2\right)dx  \nonumber \\ 
	&\geq \gamma_0 \left(\int_\Omega|u^N|^2 dx + \int_\Omega|\Delta u^N|^2 dx\right) - (b_2 + (m-1)a_0)|\Omega|.
\end{align}
Combining \qref{ener-id}, \qref{ener-rhs} and \qref{ener-lhs3} we get
	\begin{align}
& \gamma_0 \bigg(\int_\Omega|u^N(x,t)|^2 dx + \int_\Omega|\Delta u^N(x,t)|^2 dx\bigg)
	\nonumber
	\\
& \qquad   + \int_{0}^{t}\int_\Omega M_\theta(u(x,\tau))|\nabla\omega^N(x,\tau)|^2dxd\tau \leq C,
	\label{big-bnd}
	\end{align}
for a.e. $t \in [0,T]$. 
The estimate \qref{bnd-M-omN} is established.

From \qref{big-bnd}, using integration by parts and Young's inequality again, we get
\begin{align}\label{bnd-grad-uN}
	\int_\Omega |\nabla u^N|^2 dx = -\int_\Omega u^N\Delta u^N dx &\leq \frac{1}{2}\int_\Omega |u^N|^2 dx + \frac{1}{2}\int_\Omega |\Delta u^N|^2 dx 
	\leq C.
\end{align}
Then we obtain the estimate \qref{bnd-uN-Linf-H2} by combining \qref{big-bnd} and \qref{bnd-grad-uN}.
	\end{proof}
	
\

Using the growth conditions \qref{grow-1}--\qref{grow-3} and Sobolev embedding for $H^2(\Omega)$ in $\R^d$ ($1 \leq d \leq 3$), 
we obtain the following corollary.
\begin{coro}\label{coro-est}
	Let $u^N$ be a solution of the system \qref{ode1}--\qref{ode3}. 
	Then, we have
	\begin{align}
		\label{bnd-uN-C} \|u^N\|_{L^\infty(0,T;C^{0,\alpha}(\bar{\Omega}))} &\leq C, \; \text{for any } 0 < \alpha < 1/2, \\
		\label{bnd-M-theta-Linf} \|M_\theta(u^N)\|_{L^\infty(0,T;L^\infty(\Omega))} &\leq C, \\
		\label{bnd-W'-Linf} \|W'(u^N)\|_{L^\infty(0,T;L^\infty(\Omega))} &\leq C, \\
		\label{bnd-W''-Linf} \|W''(u^N)\|_{L^\infty(0,T;L^\infty(\Omega))} &\leq C.
	\end{align}
\end{coro}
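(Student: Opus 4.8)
The plan is to obtain all four bounds directly from the uniform estimate $\|u^N\|_{L^\infty(0,T;H^2(\Omega))}\leq C$ of Lemma \ref{bnd-lem}, combined with the Sobolev embedding theorem and the growth conditions \qref{grow-1}--\qref{grow-3}. None of the steps is delicate; the only point requiring attention is to track that every constant depends on $d,T,\Omega,b_1,\dots,b_4,m,\kappa,\epsilon,u_0$ but not on $\theta$ or $N$, which is automatic since the $H^2$ bound in Lemma \ref{bnd-lem} already has this property.

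First I would invoke the Sobolev embedding $H^2(\Omega)\hookrightarrow C^{0,\alpha}(\bar{\Omega})$, which holds for every $0<\alpha<1/2$ when $1\leq d\leq 3$ (in the borderline case $d=3$ one even has $\alpha=1/2$), with embedding constant depending only on $d$, $\alpha$, and $\Omega$. Composing this with \qref{bnd-uN-Linf-H2} for a.e.\ $t\in[0,T]$ yields $\|u^N(\cdot,t)\|_{C^{0,\alpha}(\bar{\Omega})}\leq C$ uniformly in $t$, which is \qref{bnd-uN-C}. In particular one obtains the cruder bound $\|u^N\|_{L^\infty(\Omega_T)}\leq C$, on which the remaining three estimates rest.

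Next, for \qref{bnd-M-theta-Linf}, the definition \qref{M-theta} gives $0\leq M_\theta(z)\leq\max\{|z|,\theta\}$ for all $z\in\R$, and since $\theta\in(0,1)$ this produces $\|M_\theta(u^N)\|_{L^\infty(0,T;L^\infty(\Omega))}\leq\max\{\|u^N\|_{L^\infty(\Omega_T)},1\}\leq C$. For \qref{bnd-W'-Linf} and \qref{bnd-W''-Linf}, I would apply the pointwise growth bounds \qref{grow-2} and the upper half of \qref{grow-3}, namely $|W'(u^N)|\leq b_3|u^N|^{2m-1}+b_4$ and $|W''(u^N)|\leq b_3|u^N|^{2m-2}+b_4$; taking the supremum over $\Omega_T$ and using $\|u^N\|_{L^\infty(\Omega_T)}\leq C$ with $m$ fixed finishes both.

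There is no genuine obstacle here: the corollary is an immediate consequence of Lemma \ref{bnd-lem}. The only items worth double-checking are that the Sobolev constant and the bound on $M_\theta$ are independent of $\theta$ and $N$, and both are clear since the $H^2$ bound of Lemma \ref{bnd-lem} is already free of $\theta$ and $N$ and since $\theta<1$.
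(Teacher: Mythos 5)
Your proposal is correct and follows exactly the route the paper intends: the paper derives this corollary in one line from the uniform $L^\infty(0,T;H^2(\Omega))$ bound of Lemma \ref{bnd-lem}, the Sobolev embedding $H^2(\Omega)\hookrightarrow C^{0,\alpha}(\bar{\Omega})$ for $1\leq d\leq 3$, and the growth conditions \qref{grow-1}--\qref{grow-3}, which is precisely what you spell out. The only cosmetic remark is that bounding $|W''(u^N)|$ uses both sides of \qref{grow-3} (the lower bound supplies $W''\geq -b_2$), not just the upper half, but this changes nothing.
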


\begin{lemma}
	Let $u^N$ be a solution of the system \qref{ode1}--\qref{ode3}. 
	Then, we have
	\begin{align}
		\label{bnd-omN-H1}\|\omega^N\|_{L^2(0,T;H^1(\Omega))}&\leq C_\theta.
	\end{align}
\end{lemma}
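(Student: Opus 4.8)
The goal is to bound $\|\omega^N\|_{L^2(0,T;H^1(\Omega))}$ by a constant $C_\theta$ that is allowed to depend on $\theta$ (but not on $N$). The plan is to control the $L^2$-norm and the gradient $L^2$-norm of $\omega^N$ separately, using the already-established bounds from Lemma \ref{bnd-lem} and Corollary \ref{coro-est}.

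First I would bound $\nabla\omega^N$ in $L^2(\Omega_T)$. Here the degeneracy cutoff is essential: since $M_\theta(u^N)\geq\theta$ pointwise by \qref{M-theta}, the estimate \qref{bnd-M-omN} gives
	\begin{align*}
\theta\int_0^T\int_\Omega|\nabla\omega^N|^2\,dxd\tau\leq\int_0^T\int_\Omega M_\theta(u^N)|\nabla\omega^N|^2\,dxd\tau\leq C,
	\end{align*}
so $\|\nabla\omega^N\|_{L^2(\Omega_T)}^2\leq C/\theta=:C_\theta$. This is the only place the $\theta$-dependence genuinely enters, so I expect this to be the ``main'' (though short) step; everything else is $\theta$-independent.

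Next I would bound $\|\omega^N\|_{L^2(\Omega_T)}$ itself. By the Poincar\'e inequality it suffices to control the spatial average $\bar\omega^N(t):=\frac{1}{|\Omega|}\int_\Omega\omega^N\,dx$ together with the gradient bound just obtained. Taking $j=1$ in \qref{ode2} (recall $\phi_1\equiv(2\pi)^{-d/2}$ and $\lambda_1=0$, so the gradient and Laplacian terms drop) gives
	\begin{align*}
\int_\Omega\omega^N\,dx=\int_\Omega\bigl(W'(u^N)+\kappa u^N\bigr)\,dx,
	\end{align*}
and the right-hand side is bounded in $L^\infty(0,T)$ by $C$ using \qref{bnd-W'-Linf} and \qref{bnd-uN-Linf-H2}. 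Hence $\|\bar\omega^N\|_{L^\infty(0,T)}\leq C$, and combining with Poincar\'e, $\|\omega^N(\cdot,t)\|_{L^2(\Omega)}\leq C(\|\nabla\omega^N(\cdot,t)\|_{L^2(\Omega)}+1)$, whose square integrates over $(0,T)$ to $C_\theta$.

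Finally I would add the two pieces: $\|\omega^N\|_{L^2(0,T;H^1(\Omega))}^2=\|\omega^N\|_{L^2(\Omega_T)}^2+\|\nabla\omega^N\|_{L^2(\Omega_T)}^2\leq C_\theta$, which is the claimed estimate. Alternatively, one can avoid invoking \qref{ode2} with $j=1$ by instead writing \qref{ode2} directly for a general test function and estimating, but using $\phi_1$ is cleanest. The only subtlety to watch is that the constant from Poincar\'e and from the Sobolev embeddings depends only on $\Omega$ and $d$, not on $N$ or $\theta$, so the final constant is legitimately of the form $C_\theta$.
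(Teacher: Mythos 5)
Your proof is correct and follows essentially the same route as the paper: the gradient bound comes from $M_\theta\geq\theta$ together with \qref{bnd-M-omN}, and the full $H^1$ bound then follows from Poincar\'e's inequality. In fact you supply a detail the paper's one-line proof leaves implicit, namely the control of the spatial mean of $\omega^N$ via \qref{ode2} with $j=1$ (using that $\phi_1$ is constant) and the bounds \qref{bnd-W'-Linf}, \qref{bnd-uN-Linf-H2}, which is exactly what is needed to pass from the gradient bound to the $L^2$ bound.
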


	\begin{proof}
Since $M_\theta(u)\geq\theta$, 
by \qref{bnd-M-omN} we have
\begin{align}\label{bnd-grad-omN}
	\int_0^T\int_\Omega|\nabla\omega^N|^2dxdt \leq \frac{C}{\theta},
\end{align}
which implies \qref{bnd-omN-H1} by using Poincar\'{e}'s inequality.
	\end{proof}

\begin{lemma}
	Let $u^N$ be a solution of the system \qref{ode1}--\qref{ode3}. 
	Then, we have
	\begin{align}
		\label{bnd-uN-H5}\|u^N\|_{L^2(0,T;H^5(\Omega))}&\leq C_\theta.
	\end{align}
\end{lemma}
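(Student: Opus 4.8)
The plan is to derive the $L^2(0,T;H^5(\Omega))$ bound for $u^N$ by treating the weak elliptic equation \qref{ode2} as an elliptic regularity statement for the operator $u\mapsto \kappa\Delta^2 u$, with everything else moved to the right-hand side, and then bootstrapping in the Galerkin subspace. First I would observe that \qref{ode2} says precisely that, for each fixed $t$, $u^N(\cdot,t)$ is the Galerkin solution of the fourth-order elliptic problem
\begin{align*}
\kappa\Delta^2 u^N - 2\kappa\Delta u^N + \kappa u^N = \omega^N - W'(u^N) \quad \text{(weakly, tested against } \phi_1,\dots,\phi_N\text{)}.
\end{align*}
Because the $\phi_j$ are eigenfunctions of $-\Delta$, the left-hand operator is diagonalized: writing $u^N=\sum c_j^N\phi_j$ and $\omega^N - \Pi_N W'(u^N) = \sum e_j^N\phi_j$ (where $\Pi_N$ is the $L^2$-projection onto $\mathrm{span}\{\phi_1,\dots,\phi_N\}$), we get the exact relation $(\kappa\lambda_j^2 + 2\kappa\lambda_j + \kappa)c_j^N = e_j^N$. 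Hence $\|u^N\|_{H^5(\Omega)}^2 \sim \sum (1+\lambda_j)^5 |c_j^N|^2 = \sum (1+\lambda_j)^5 (\kappa\lambda_j^2+2\kappa\lambda_j+\kappa)^{-2}|e_j^N|^2 \le C\sum (1+\lambda_j)(1+\lambda_j^2)^{-1}\cdot(1+\lambda_j^2)^2(1+\lambda_j^2)^{-2}|e_j^N|^2$, and after accounting for the powers this is bounded by $C\sum (1+\lambda_j)|e_j^N|^2 = C\|\omega^N - \Pi_N W'(u^N)\|_{H^1(\Omega)}^2$ — the key point being that the symbol $(1+\lambda_j)^{5/2}/(\kappa\lambda_j^2+2\kappa\lambda_j+\kappa)$ behaves like $(1+\lambda_j)^{1/2}$, so we lose only one $H^1$ derivative worth of control on the right.

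Next I would control $\|\omega^N - \Pi_N W'(u^N)\|_{L^2(0,T;H^1(\Omega))}$. For the $\omega^N$ term this is exactly \qref{bnd-omN-H1}, giving $C_\theta$. For $\Pi_N W'(u^N)$, since $\Pi_N$ is bounded on $H^1$ (the $\phi_j$ being orthogonal in both $L^2$ and $H^2$, hence also in $H^1$ by interpolation, so the projection does not increase the $H^1$ norm), it suffices to bound $\|W'(u^N)\|_{L^2(0,T;H^1(\Omega))}$. But $\nabla W'(u^N) = W''(u^N)\nabla u^N$, and by \qref{bnd-W''-Linf} we have $\|W''(u^N)\|_{L^\infty} \le C$, while $\|\nabla u^N\|_{L^\infty(0,T;L^2(\Omega))}\le C$ follows from \qref{bnd-uN-Linf-H2}; combined with \qref{bnd-W'-Linf} for the zeroth-order part, this gives $\|W'(u^N)\|_{L^\infty(0,T;H^1(\Omega))}\le C$, which is more than enough. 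Assembling these, $\|u^N\|_{L^2(0,T;H^5(\Omega))}^2 \le C\int_0^T \|\omega^N - \Pi_N W'(u^N)\|_{H^1(\Omega)}^2\,dt \le C_\theta$, which is the claim.

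The main obstacle I anticipate is bookkeeping the symbol estimate carefully enough to be sure that only one derivative is lost and the resulting power matches $H^1$ on the right-hand side — i.e. verifying that $(1+\lambda_j)^5/(\kappa\lambda_j^2+2\kappa\lambda_j+\kappa)^2 \le C(1+\lambda_j)$ uniformly in $j$, which is immediate once written out but is the crux of why $H^5$ (and not more) is the natural regularity. A secondary technical point is justifying that $\Pi_N$ is uniformly bounded on $H^1(\Omega)$ independently of $N$; this is where the stated orthogonality of the $\phi_j$ in both $L^2$ and $H^2$ is used (via $\|v\|_{H^1}^2 \le \|v\|_{L^2}\|v\|_{H^2}$ for the Fourier-type coefficients, or simply because on the torus the $\phi_j$ are the usual trigonometric functions and $\Pi_N$ is a Fourier truncation, which is an orthogonal projection in every $H^s$). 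Everything else is a direct combination of the a priori bounds already established in Lemma \ref{bnd-lem}, Corollary \ref{coro-est}, and \qref{bnd-omN-H1}.
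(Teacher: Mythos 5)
There is a genuine error in the central identity of your argument. You write the linear part of the chemical potential as $\kappa\Delta^2 u^N - 2\kappa\Delta u^N + \kappa u^N$ and read off the symbol $\kappa\lambda_j^2 + 2\kappa\lambda_j + \kappa = \kappa(1+\lambda_j)^2$. But \qref{pfc-eqn2} (equivalently, the weak form \qref{ode2} after integrating the middle term by parts) gives $\omega^N = \Pi_N W'(u^N) + \kappa\left(u^N + 2\Delta u^N + \Delta^2 u^N\right)$, and since $\Delta\phi_j = -\lambda_j\phi_j$ the correct coefficient relation is $\kappa(1 - 2\lambda_j + \lambda_j^2)c_j^N = \kappa(1-\lambda_j)^2 c_j^N = e_j^N$. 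The symbol $\kappa(1-\lambda_j)^2$ vanishes at $\lambda_j = 1$, which \emph{is} an eigenvalue of $-\Delta$ on $[0,2\pi]^d$ (take the Fourier mode $k=(1,0,\dots,0)$). So the operator $\kappa(I+\Delta)^2$ is not invertible and the step ``divide by the symbol'' fails on the $\lambda=1$ eigenspace; this degeneracy is not incidental --- it is precisely the mechanism by which the PFC energy selects patterns at wavenumber one, and it is why the proof of Lemma \ref{bnd-lem} has to work to absorb the negative $-\kappa|\nabla u|^2$ term. Fortunately the gap is local in frequency and easily repaired: for the finitely many eigenvalues with, say, $\lambda_j\le 4$, bound $(1+\lambda_j)^5|c_j^N|^2\le 5^5|c_j^N|^2$ directly using the $L^\infty(0,T;L^2(\Omega))$ control from \qref{bnd-uN-Linf-H2}; for $\lambda_j>4$ one has $(1-\lambda_j)^2\ge c(1+\lambda_j)^2$, so your inversion goes through and yields $\sum_{\lambda_j>4}(1+\lambda_j)^5|c_j^N|^2\le C\sum_j(1+\lambda_j)|e_j^N|^2\le C\|\omega^N-\Pi_N W'(u^N)\|_{H^1(\Omega)}^2$. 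With that patch, your remaining estimates --- $\|\omega^N\|_{L^2(0,T;H^1(\Omega))}\le C_\theta$ from \qref{bnd-omN-H1}, and $\|W'(u^N)\|_{L^\infty(0,T;H^1(\Omega))}\le C$ via $\nabla W'(u^N)=W''(u^N)\nabla u^N$ together with \qref{bnd-W''-Linf} and \qref{bnd-uN-Linf-H2}, plus the harmlessness of the Fourier truncation $\Pi_N$ on the torus --- are correct and suffice.

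For comparison, the paper avoids the spectral inversion entirely: it first tests \qref{ode2} against $\Delta u^N$ to extract $\|\nabla\Delta u^N\|_{L^2(\Omega_T)}\le C_\theta$, then solves the strong form $\omega^N = W'(u^N)+\kappa u^N+2\kappa\Delta u^N+\kappa\Delta^2 u^N$ (and its gradient) for the highest-order term, reading $\Delta^2 u^N$ and $\nabla\Delta^2 u^N$ off the equation rather than inverting the degenerate elliptic operator. Once the low modes are treated separately your route is a legitimate alternative, and it is arguably more transparent about why $H^5$ is the natural regularity gain from $\omega^N\in H^1$; but as written the sign error conceals the one place where the argument genuinely needs an extra input.
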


	\begin{proof}
Since $\Delta u^N = \sum_{j=1}^N c_j^N(t)\Delta\phi_j = -\sum_{j=1}^N c_j^N(t)\lambda_j\phi_j$, 
then from \qref{ode2} we get
\begin{align}
	\int_\Omega\omega^N \Delta u^N &= \int_\Omega(W'(u^N)\Delta u^N + \kappa u^N\Delta u^N - 2\kappa\nabla u^N\cdot\nabla\Delta u^N + \kappa\Delta u^N \Delta^2 u^N)dx  \nonumber \\ 
	&= \int_\Omega(W'(u^N)\Delta u^N - \kappa |\nabla u^N|^2 + 2\kappa|\Delta u^N|^2 - \kappa|\nabla\Delta u^N|^2)dx.
\end{align}
Then, using \qref{bnd-uN-Linf-H2}, \qref{bnd-W'-Linf} and \qref{bnd-omN-H1}, we obtain
\begin{align}\label{bnd-grad-lapl-uN}
	&\int_0^T\int_\Omega|\nabla\Delta u^N|^2 dxdt   \nonumber \\ 
	&= \frac{1}{\kappa}\int_0^T\int_\Omega(W'(u^N)\Delta u^N - \omega^N \Delta u^N - \kappa |\nabla u^N|^2 + 2\kappa|\Delta u^N|^2)dxdt    \nonumber \\ 
	&\leq \frac{1}{\kappa}(\|W'(u^N)\|^2_{L^2(\Omega_T)} + \|\omega^N\|^2_{L^2(0,T;L^2(\Omega))} + 2\kappa)\|\Delta u^N\|^2_{L^2(0,T;L^2(\Omega))}  \nonumber \\ 
	&\leq C_\theta.
\end{align}

By \qref{ode2}, we have
\begin{align}\label{wN-str}
	\omega^N = W'(u^N) + \kappa u^N + 2\kappa\Delta u^N + \kappa\Delta^2 u^N.
\end{align}
So by \qref{bnd-uN-Linf-H2}, \qref{bnd-W'-Linf} and \qref{bnd-omN-H1}, we obtain
\begin{align}\label{bnd-lapl2-uN}
	&\int_0^T\int_\Omega|\Delta^2 u^N|^2 dxdt   \nonumber \\ 
	&\leq \frac{4}{\kappa^2}\int_0^T\int_\Omega (|W'(u^N)|^2 + \kappa^2 |u^N|^2 + 4\kappa^2|\Delta u^N|^2 + |\omega^N|^2)dxdt    \nonumber \\ 
	&\leq C_\theta.
\end{align}

Taking the derivatives with respect to $x$ on both sides of \qref{wN-str}, we get
\begin{align}
	\nabla\omega^N = W''(u^N)\nabla u^N + \kappa\nabla u^N + 2\kappa\nabla\Delta u^N + \kappa\nabla\Delta^2 u^N.
\end{align}
Then, by \qref{bnd-uN-Linf-H2}, \qref{bnd-W''-Linf}, \qref{bnd-omN-H1} and \qref{bnd-grad-lapl-uN}, we obtain
\begin{align}\label{bnd-grad-lapl2-uN}
	&\int_0^T\int_\Omega|\nabla\Delta^2 u^N|^2 dxdt   \nonumber \\ 
	&\leq \frac{4}{\kappa^2}\int_0^T\int_\Omega (|\nabla\omega^N|^2 + |W''(u^N)|^2 |\nabla u^N|^2 + \kappa^2 |\nabla u^N|^2 + 4\kappa^2|\nabla\Delta u^N|^2)dxdt    \nonumber \\ 
	&\leq C_\theta.
\end{align}
Combining \qref{bnd-uN-Linf-H2}, \qref{bnd-grad-lapl-uN}, \qref{bnd-lapl2-uN} and \qref{bnd-grad-lapl2-uN}, we obtain \qref{bnd-uN-H5}.
	\end{proof}

\begin{lemma}
	Let $u^N$ be a solution of the system \qref{ode1}--\qref{ode3}, then we have
	\begin{align}
		\label{bnd-uN_t}\|\partial_tu^N\|_{L^2(0,T;H^{-2}(\Omega))}\leq C.
	\end{align}
\end{lemma}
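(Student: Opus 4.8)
The plan is to exploit the Galerkin structure of $u^N$: since $\partial_t u^N(\cdot,t)$ lies in the finite-dimensional space $V_N:=\mathrm{span}\{\phi_1,\dots,\phi_N\}$, testing it against an arbitrary $\xi\in H^2(\Omega)$ reduces to testing against the $L^2$--orthogonal projection $P_N\xi=\sum_{j=1}^N\big(\int_\Omega\xi\phi_j\,dx\big)\phi_j$. Because $\{\phi_j\}$ is orthogonal in both $L^2(\Omega)$ and $H^2(\Omega)$, Bessel's inequality yields $\|P_N\xi\|_{H^2(\Omega)}\le\|\xi\|_{H^2(\Omega)}$, while $\partial_t u^N\in V_N$ gives $\int_\Omega\partial_tu^N\,(\xi-P_N\xi)\,dx=0$. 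Hence $\langle\partial_tu^N,\xi\rangle_{H^{-2}(\Omega),H^2(\Omega)}=\int_\Omega\partial_tu^N\,P_N\xi\,dx$, and by the linearity of \qref{ode1} in the test function this equals $-\int_\Omega M_\theta(u^N)\nabla\omega^N\cdot\nabla P_N\xi\,dx$.

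The next step is a weighted Cauchy--Schwarz estimate designed to keep the constant independent of $\theta$. Splitting $M_\theta(u^N)=M_\theta(u^N)^{1/2}\cdot M_\theta(u^N)^{1/2}$ and using \qref{bnd-M-theta-Linf} together with $\|\nabla P_N\xi\|_{L^2(\Omega)}\le\|P_N\xi\|_{H^2(\Omega)}\le\|\xi\|_{H^2(\Omega)}$,
\begin{align*}
\left|\int_\Omega M_\theta(u^N)\nabla\omega^N\cdot\nabla P_N\xi\,dx\right|
&\le\left(\int_\Omega M_\theta(u^N)|\nabla\omega^N|^2\,dx\right)^{1/2}\left(\int_\Omega M_\theta(u^N)|\nabla P_N\xi|^2\,dx\right)^{1/2}\\
&\le \|M_\theta(u^N)\|_{L^\infty(\Omega)}^{1/2}\left(\int_\Omega M_\theta(u^N)|\nabla\omega^N|^2\,dx\right)^{1/2}\|\xi\|_{H^2(\Omega)}\\
&\le C\left(\int_\Omega M_\theta(u^N(x,t))|\nabla\omega^N(x,t)|^2\,dx\right)^{1/2}\|\xi\|_{H^2(\Omega)}.
\end{align*}
Taking the supremum over $\|\xi\|_{H^2(\Omega)}\le1$ gives $\|\partial_tu^N(\cdot,t)\|_{H^{-2}(\Omega)}\le C\big(\int_\Omega M_\theta(u^N(x,t))|\nabla\omega^N(x,t)|^2\,dx\big)^{1/2}$ for a.e.\ $t$.

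Finally I would square both sides and integrate over $(0,T)$: the right-hand side becomes $C^2\int_0^T\int_\Omega M_\theta(u^N)|\nabla\omega^N|^2\,dx\,dt$, which is bounded by $C$ by \qref{bnd-M-omN} with $t=T$. This establishes \qref{bnd-uN_t}.

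There is no genuine obstacle here beyond one point of care: one must resist bounding $M_\theta(u^N)$ crudely by a constant and then invoking \qref{bnd-grad-omN}, since that inequality only controls $\|\nabla\omega^N\|_{L^2}$ up to an $O(\theta^{-1})$ factor and would produce a $\theta$-dependent constant $C_\theta$. The whole content of the lemma is that the bound is \emph{uniform} in $\theta$, and this is precisely what the $M_\theta^{1/2}$--weighted Cauchy--Schwarz step above delivers, feeding off the $\theta$-uniform dissipation bound \qref{bnd-M-omN} rather than \qref{bnd-grad-omN}.
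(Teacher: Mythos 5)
Your proof is correct and follows essentially the same route as the paper: both hinge on bounding the pairing by $\|\sqrt{M_\theta(u^N)}\nabla\omega^N\|_{L^2}$ (controlled uniformly in $\theta$ by the dissipation bound \qref{bnd-M-omN}) times a quantity controlled by $\|\xi\|_{H^2}$ via the $L^\infty$ bound on $M_\theta(u^N)$ — the paper uses a three-factor generalized H\"older inequality plus the Sobolev embedding $H^1\hookrightarrow L^{2d/(d-2)}$, while you use a two-factor $M_\theta^{1/2}$-weighted Cauchy--Schwarz, which is equivalent in substance. Your explicit insertion of the projection $P_N$ (with the Bessel bound $\|P_N\xi\|_{H^2}\le\|\xi\|_{H^2}$ from the joint $L^2$/$H^2$ orthogonality of the $\phi_j$) is in fact a point the paper glosses over, since \qref{ode1} only holds for test functions in the Galerkin space.
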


	\begin{proof}
First, the estimate \qref{big-bnd} implies that $\|\sqrt{M_\theta(u^N)}\nabla\omega^N\|_{L^2(0,T;L^2(\Omega))}\leq C$.
Then, for any $\phi\in L^2(0,T;H^2(\Omega))$, by \qref{ode1} and the generalized H\"{o}lder's inequality, we have
\begin{align}
	\left| \int_\Omega\partial_t u^N\phi dx\right| &=\left| \int_\Omega M_\theta(u^N)\nabla\omega^N\nabla\phi dx\right|  \nonumber \\ 
	&\leq \left\|\sqrt{M_\theta(u^N)} \right\|_{L^d(\Omega)} \left\|\sqrt{M_\theta(u^N)}\nabla\omega^N \right\|_{L^2(\Omega)}\|\nabla\phi \|_{L^{2d/(d-2)}(\Omega)}dt  \nonumber \\ 
	&\leq C \left\|\sqrt{M_\theta(u^N)}\nabla\omega^N \right\|_{L^2(\Omega)}\|\nabla\phi\|_{L^{2d/(d-2)}(\Omega)}.
\end{align}
Since $H^1(\Omega)\subset\subset L^{2d/(d-2)}(\Omega)$, by \qref{bnd-M-theta-Linf} and H\"{o}lder's inequality, we have
\begin{align}
	\left| \int_0^T\int_\Omega\partial_t u^N\phi dxdt\right| &\leq  C\int_0^T \left\|\sqrt{M_\theta(u^N)}\nabla\omega^N \right\|_{L^2(\Omega)}\|\nabla\phi\|_{L^{2d/(d-2)}(\Omega)}dt  \nonumber \\  
	&\leq C \left\|\sqrt{M_\theta(u^N)}\nabla\omega^N \right\|_{L^2(0,T;L^2(\Omega))}\|\nabla\phi\|_{L^2(0,T;L^{2d/(d-2)}(\Omega))}  \nonumber \\  
	&\leq C\|\nabla\phi\|_{L^2(0,T;H^1(\Omega))}  \nonumber \\  
	&\leq C\|\phi\|_{L^2(0,T;H^2(\Omega))}.
\end{align}
This implies $\|\partial_tu^N\|_{L^2(0,T;H^{-2}(\Omega))}\leq C$.
 	\end{proof}

\subsection{Convergence of $u^N$ and the existence of a weak solution} \label{weak-sln-gnl}
In $\R^d$ with $1 \leq d \leq 3$, we have 
$H^2(\Omega)\subset\subset H^1(\Omega)\subset\subset C^{0,\alpha}(\bar{\Omega}) \hookrightarrow H^{-2}(\Omega)$ 
and $H^5(\Omega)\subset\subset H^l(\Omega)\subset\subset C^{3,\alpha}(\bar{\Omega}) \hookrightarrow H^{-2}(\Omega)$, 
for any integer  $1\leq l \leq 4$ and any $0<\alpha<1/2$. 
Hence, by the Aubin--Lions lemma (see \cite{Aubin, Simon-Lp}), we have
\begin{align*}
	&\{f\in 	L^\infty(0,T;H^2(\Omega)):\partial_tf\in L^2(0,T;H^{-2}(\Omega))\}\subset\subset C([0,T];H^1(\Omega)),\\
	&\{f\in 	L^\infty(0,T;H^2(\Omega)):\partial_tf\in L^2(0,T;H^{-2}(\Omega))\}\subset\subset C([0,T];C^{0,\alpha}(\bar{\Omega})),\\
	&\{f\in 	L^2(0,T;H^5(\Omega)):\partial_tf\in L^2(0,T;H^{-2}(\Omega))\}\subset\subset L^2(0,T;H^l(\Omega)), \\
	&\{f\in 	L^2(0,T;H^5(\Omega)):\partial_tf\in L^2(0,T;H^{-2}(\Omega))\}\subset\subset L^2(0,T;C^{3,\alpha}(\bar{\Omega})),
\end{align*}
for any integer $1\leq l \leq 4$ and any $0<\alpha<1/2$. 
Combining with the weak compactness in $L^2(0,T;H^5(\Omega))$ and $L^2(0,T;H^{-2}(\Omega)$, 
by \qref{bnd-uN-Linf-H2}, \qref{bnd-uN-H5} and \qref{bnd-uN_t}, 
there exist a subsequence of $\{u^N\}$ (not relabeled) and a function 
\begin{align*}	
	u_\theta\in L^2(0,T;H^5(\Omega))\cap L^2(0,T;C^{3,\alpha}(\bar{\Omega}))\cap C([0,T];H^1(\Omega))\cap C([0,T];C^{0,\alpha}(\bar{\Omega})) 
\end{align*}
such that as $N\to \infty$,
\begin{align}
	\label{u-wconv-H5}u^N&\rightharpoonup u_\theta \quad \text{weakly in } L^2(0,T;H^5(\Omega)), \\
	\label{u_t-wconv2}\partial_tu^N&\rightharpoonup \partial_tu_\theta \quad \text{weakly in } L^2(0,T;H^{-2}(\Omega)),\\
	\label{uN-conv1-H1}u^N&\to u_\theta \quad \text{strongly in } C([0,T];H^1(\Omega)), \\
	\label{uN-conv1-C}u^N&\to u_\theta \quad \text{strongly in } C([0,T];C^{0,\alpha}(\bar{\Omega})), \\
	\label{u-conv2-HL}u^N&\to  u_\theta \quad \text{strongly in } L^2(0,T;H^l(\Omega)) \; \text{and a.e. in } \Omega_T, \\
	\label{u-conv2-C3}u^N&\to  u_\theta \quad \text{strongly in } L^2(0,T;C^{3,\alpha}(\bar{\Omega})),
\end{align}
for any $1\leq l \leq 4$ and any $0<\alpha<1/2$. 
In addition, using \qref{bnd-uN-Linf-H2} and \qref{bnd-uN_t}, we have the following bounds for $u_\theta$:
\begin{align}
	\label{bnd-u-H2}\|u_\theta\|_{L^\infty(0,T;H^2(\Omega))} &\leq C, \\
	\label{bnd-u_t-H2}\|\partial_tu_\theta\|_{L^2(0,T;H^{-2}(\Omega))} &\leq C.
\end{align}

From \qref{bnd-omN-H1}, there exist a subsequence of $\{\omega^N\}$ (not relabeled) and a function $\omega_\theta\in L^2(0,T;H^1(\Omega))$ such that as $N\to \infty$,
\begin{align}
	\label{omN-wconv2-H1}\omega^N&\rightharpoonup \omega_\theta \quad \text{weakly in } L^2(0,T;H^1(\Omega)).
\end{align}
By \qref{uN-conv1-H1}, $M_\theta(u^N) \to  M_\theta(u_\theta)$ strongly in $C([0,T];L^d(\Omega))$, hence
\begin{align}
	\label{M-grad-omN-wconv1}M_\theta(u^N)\nabla\omega^N&\rightharpoonup M_\theta(u_\theta)\nabla\omega_\theta \quad \text{weakly in } L^2(0,T;L^{2d/(d+2)}(\Omega,\R^d))  \text{ as } N\to \infty.
\end{align}

For any $\gamma(t)\in L^2(0,T)$, because $\gamma(t)\nabla\phi_j\in L^2(0,T;L^{2d/(d-2)}(\Omega))$, 
multiplying both sides of \qref{ode1} by $\gamma(t)$, 
integrating over the time interval $(0,T)$, and taking the limits as $N\to \infty$, by \qref{u_t-wconv2} and \qref{M-grad-omN-wconv1}, we have
\begin{align}\label{PFC-w1s}
	\int_0^T\left\langle \partial_tu_\theta,\gamma(t)\phi_j(x)\right\rangle_{H^{-2}(\Omega),H^2(\Omega)}dt = -\int_0^T\int_\Omega M_\theta(u_\theta)\nabla\omega_\theta\cdot\gamma(t)\nabla\phi_j(x) dxdt.
\end{align}
For any function $\xi\in L^2(0,T;H^2(\Omega))$, since its Fourier series $\sum_{j=1}^\infty a_j(t)\phi_j(x)$ converges strongly to $\xi$ in
$L^2(0,T;H^2(\Omega))$, 
then $\sum_{j=1}^\infty a_j(t)\nabla\phi_j(x)$ converges strongly to $\nabla\xi$ in $L^2(0,T;L^{2d/(d-2)}(\Omega,\R^d))$. 
Hence, by \qref{PFC-w1s}, we have
\begin{align}\label{PFC-w2a}
	\int_0^T\left\langle \partial_tu_\theta,\xi\right\rangle_{H^{-2}(\Omega),H^2(\Omega)}dt = -\int_0^T\int_\Omega M_\theta(u_\theta)\nabla\omega_\theta\cdot\nabla\xi dxdt,
\end{align}
for all $\xi\in L^2(0,T;H^2(\Omega))$,. 

By \qref{uN-conv1-C} and \qref{bnd-W'-Linf}, we have
\begin{align}\label{W'-uN-conv2}
	W'(u^N) \to  W'(u_\theta) \quad \text{strongly in } C([0,T];L^q(\Omega))  \text{ as } N\to \infty,
\end{align}
for any $1\leq q <\infty$. 
Then, for any $\gamma(t)\in L^2(0,T)$, because $\gamma(t)\phi_j\in L^2(0,T;C(\Omega))$, 
multiplying both sides of \qref{ode2} by $\gamma(t)$, integrating over the time interval $(0,T)$, 
and taking the limits as $N\to \infty$, by \qref{u-wconv-H5} and \qref{W'-uN-conv2}, we obtain
\begin{align}\label{PFC-w2s}
	&\int_0^T\int_\Omega\omega_\theta\gamma(t)\phi_jdxdt  \nonumber \\ 
	&=\int_0^T\int_\Omega W'(u_\theta)\gamma(t)\phi_j + \kappa u_\theta\gamma(t)\phi_j - 2\kappa \nabla u_\theta \cdot \gamma(t)\nabla\phi_j + \kappa \Delta u_\theta\gamma(t)\Delta\phi_j \;dxdt.
\end{align}
For any function $\xi\in L^2(0,T;H^2(\Omega))$, since its Fourier series 	$\sum_{j=1}^\infty a_j(t)\phi_j(x)$ converges strongly to $\xi$ in $L^2(0,T;H^2(\Omega))$, 
then $\sum_{j=1}^\infty a_j(t)\nabla\phi_j(x)$ converges strongly to $\nabla\xi$ in $L^2(0,T;L^{2d/(d-2)}(\Omega,\R^d))$ 
and $\sum_{j=1}^\infty a_j(t)\Delta\phi_j(x)$ converges strongly to $\Delta\xi$ in $L^2(0,T;L^{2d/(d-2)}(\Omega))$. 
Hence, by \qref{PFC-w2s}, we obtain
\begin{align}\label{PFC-w2b}
	\int_0^T\int_\Omega\omega_\theta\xi dxdt = \int_0^T\int_\Omega  W'(u_\theta)\xi + \kappa u_\theta\xi - 2\kappa \nabla u_\theta \cdot \nabla\xi + \kappa \Delta u_\theta\Delta\xi \;dxdt,
\end{align}
for all $\xi\in L^2(0,T;H^2(\Omega))$. 
Since $u_\theta\in L^2(0,T;H^5(\Omega))$, 
then $\omega_\theta\in L^2(0,T;H^3(\Omega))$, hence, from \qref{PFC-w2b} we obtain
\begin{align}\label{om-theta-ae}
	\omega_\theta = W'(u_\theta) + \kappa u_\theta + 2\kappa \Delta u_\theta + \kappa \Delta^2 u_\theta \quad \text{a.e. in } \Omega_T,
\end{align}
and
\begin{align}\label{grad-om-theta-ae}
	\nabla\omega_\theta = W''(u_\theta)\nabla u_\theta + \kappa\nabla u_\theta + 2\kappa\nabla\Delta u_\theta + \kappa\nabla\Delta^2 u_\theta \quad \text{a.e. in } \Omega_T.
\end{align}
Combining \qref{grad-om-theta-ae} and \qref{PFC-w2a} we get \qref{PFC-w2}.

For the initial value, by \qref{ode3}, we have
\begin{align*}
	u^N(x,0) \to  u_0(x) \quad \text{strongly in }  L^2(\Omega)  \text{ as } N\to \infty.
\end{align*}
Combining with \qref{u-conv2-HL}, we have $u_\theta(x,0)=u_0(x)$, for a.e. $x\in\Omega$.

\subsection{Energy inequality}
From \qref{uN-conv1-H1}, we have $\sqrt{M_\theta(u^N)} \to  \sqrt{M_\theta(u_\theta)}$ strongly in $C([0,T];L^2(\Omega))$ as $N\to \infty$. 
Combining with \qref{omN-wconv2-H1}, we get
\begin{align}
	\sqrt{M_\theta(u^N)}\nabla\omega^N&\rightharpoonup \sqrt{M_\theta(u_\theta)}\nabla\omega_\theta \quad \text{weakly in } L^2(0,T;L^1(\Omega;\R^d))  \text{ as } N\to \infty.
\end{align}
On the other hand, by \qref{bnd-M-omN}, there exist a subsequence of $\{ \sqrt{M_\theta(u^N)}\nabla\omega^N \}$ (not relabeled) 
and a function $\Lambda_\theta \in L^2(0,T;L^2(\Omega;\R^d))$ such that
\begin{align}
	\sqrt{M_\theta(u^N)}\nabla\omega^N &\rightharpoonup \Lambda_\theta \quad \text{weakly in } L^2(0,T;L^2(\Omega;\R^d))  \text{ as } N\to \infty.
\end{align}
By the uniqueness of weak limits, we have $\Lambda_\theta=\sqrt{M_\theta(u_\theta)}\nabla\omega_\theta$, 
which implies
\begin{align}\label{sqrtM-grad-omN-wconv}
	\sqrt{M_\theta(u^N)}\nabla\omega^N&\rightharpoonup \sqrt{M_\theta(u_\theta)}\nabla\omega_\theta \quad \text{weakly in } L^2(0,T;L^2(\Omega;\R^d))  \text{ as } N\to \infty.
\end{align}
Also, by \qref{uN-conv1-C}, we have
\begin{align}\label{W-uN-conv2}
	W(u^N) \to  W(u_\theta) \quad \text{strongly in } C([0,T];L^q(\Omega))  \text{ as } N\to \infty,
\end{align}
for any $1\leq q <\infty$. 

Because $u^N$ and $\omega^N$ satisfy the energy identity \qref{ener-id}, 
using \qref{u-wconv-H5}--\qref{u-conv2-C3}, \qref{sqrtM-grad-omN-wconv} and \qref{W-uN-conv2}, 
and the weak lower semicontinuity of norms, 
by taking the limits as $N \to \infty$ in \qref{ener-id}, we obtain the energy inequality \qref{ener-ineq1}.

\subsection{Positive initial data}
In this section, we assume that the initial data $u_0(x)>0$, for all $x\in\Omega$. 
For any $0<\theta<1$, 
we define the entropy densities $\Phi:(0,\infty)\to  [0,\infty)$ and $\Phi_\theta: \R \to  [0,\infty)$ as
\begin{align*}
	\Phi''(z)=\frac{1}{M(z)}, \quad \Phi(1)=\Phi'(1)=0,
\end{align*}
and
\begin{align*}
	\Phi_\theta''(z)=\frac{1}{M_\theta(z)}, \quad  \Phi_\theta(1)=\Phi_\theta'(1)=0.
\end{align*}
Using the definitions of $M(u)$ and $M_\theta(u)$ in \qref{M(u)} and \qref{M-theta}, we obtain
\begin{align}\label{Phi(u)}
	\Phi(z) = z\ln(z)-z+1 \;  \text{ for } z>0,
\end{align}
and
\begin{align}\label{Phi-theta}
	\Phi_\theta(z) = \begin{cases}
					z\ln (z)-z+1 &, z>\theta,  \\
					\frac{1}{2\theta}z^2+(\ln(\theta)-1)z+1-\frac{\theta}{2} &, z\leq\theta. \end{cases}
\end{align}
We see that $\Phi_\theta\in C^2(\R)$, 
$0\leq\Phi_\theta(z)\leq\Phi(z)$, for all $z>0$, and $\Phi_\theta(z)=\Phi(z)$, for all $z\geq\theta$.

\textbf{Claim 1.} \textit{For any $t\in [0,T]$,}
\begin{align}\label{Phi-theta-eqn}
	\int_\Omega\Phi_\theta(u_\theta(x,t))dx - \int_\Omega\Phi_\theta(u_0(x))dx=-\int_0^t\int_\Omega \nabla\omega_\theta\cdot\nabla u_\theta dxd\tau.
\end{align}

\begin{proof}
For any $\epsilon>0$, let $\Phi_{\theta,\epsilon}$ be the mollification of $\Phi_\theta$. 
Since $\Phi_\theta \in C^2(\R)$, then $\Phi_{\theta,\epsilon} \to \Phi_\theta, \Phi'_{\theta,\epsilon} \to \Phi'_\theta$ and $\Phi''_{\theta,\epsilon} \to \Phi''_\theta$ uniformly on compact subsets of $\R$ as $\epsilon \to 0$. 
We will show that
\begin{align}\label{Phi-theta-eps-eqn}
	\int_\Omega\Phi_{\theta,\epsilon}(u_\theta(x,t))dx - \int_\Omega\Phi_{\theta,\epsilon}(u_0(x))dx=-\int_0^t\int_\Omega M_\theta(u_\theta)\Phi_{\theta,\epsilon}''(u_\theta)\nabla\omega_\theta\cdot\nabla u_\theta dxd\tau.
\end{align}

For any $h>0$, define 
\begin{align}
	u_{\theta,h}(x,t):=\frac{1}{h}\int_{t-h}^ tu_\theta(x,\tau)d\tau,
\end{align}
where we set $u_\theta(x,t)=u_0(x)$ for $t\leq 0$. 
Since $H^3(\Omega)\subset\subset H^2(\Omega)\hookrightarrow H^{-3}(\Omega)$, by the Aubin--Lions lemma,
\begin{align*}
	\{f\in 	L^2(0,T;H^3(\Omega)):\partial_tf\in L^2(0,T;H^{-3}(\Omega))\}\subset\subset L^2(0,T;H^2(\Omega)).
\end{align*}
Since $u_\theta\in L^\infty(0,T;H^5(\Omega))$ and $\Phi'_{\theta,\epsilon}, \Phi''_{\theta,\epsilon}, \Phi'''_{\theta,\epsilon}, \Phi_{\theta,\epsilon}^{(4)}$ are bounded, we have
\begin{align*}
	\sup_{h>0}\|\Phi_{\theta,\epsilon}'(u_{\theta,h})\|_{L^2(0,T;H^3(\Omega))} \leq C_\theta \quad \text{and} 
	\quad  \partial_t\Phi_{\theta,\epsilon}'(u_{\theta,h})\in L^2(0,T;H^{-3}(\Omega)),
\end{align*}
for any $h>0$. 
Hence, there exists a subsequence of $\{\Phi_{\theta,\epsilon}'(u_{\theta,h})\}_{h>0}$ (not relabeled) such that
\begin{align} \label{conv-Phi'-u-theta-h}
	\Phi_{\theta,\epsilon}'(u_{\theta,h}) \to  \Phi_{\theta,\epsilon}'(u_\theta) \quad \text{strongly in }  L^2(0,T;H^2(\Omega)) \; 
	\text{as } h\to  0.
\end{align}
On the other hand, define  $\textup{\textbf{J}}_\theta:=-M_\theta(u_\theta)\nabla\omega_\theta$. 
Since $\omega_\theta \in L^2(0,T;H^3(\Omega))$, $\textup{\textbf{J}}_\theta \in L^2(0,T;H^2(\Omega;\R^d))$. 
Hence, for any $\xi\in L^2(0,T;H^2(\Omega))$,
\begin{align}
	&\left| \left\langle \partial_tu_{\theta,h} - \partial_tu_\theta,\xi\right\rangle_{(L^2(0,T;H^{-2}(\Omega)),L^2(0,T;H^2(\Omega))} \right|  \nonumber \\ 
	=& \frac{1}{h}\left| \int_0^T\left\langle \int_{t-h}^t(\partial_tu_\theta(\tau) - \partial_tu_\theta(t))d\tau,
	\xi\right\rangle_{(H^{-2}(\Omega),H^2(\Omega))} dt \right|  \nonumber \\  
	=& \frac{1}{h}\left| \int_0^T\left\langle \int_{-h}^0(\partial_tu_\theta(t+s) - \partial_tu_\theta(t))ds,
	\xi\right\rangle_{(H^{-2}(\Omega),H^2(\Omega))} dt \right| \nonumber \\   
	\leq& \frac{1}{h} \int_{-h}^0 \left| \int_0^T \int_\Omega \nabla\xi\cdot (\textup{\textbf{J}}_\theta(t+s)
	-\textup{\textbf{J}}_\theta(t))dxdt\right| ds  \nonumber \\ 
	\leq& \|\xi\|_{L^2(0,T;H^2(\Omega))}\sup_{-h\leq s \leq 0}\|\textup{\textbf{J}}_\theta(\cdot+s)
	-\textup{\textbf{J}}_\theta(\cdot)\|_{L^2(0,T;H^2(\Omega))},
\end{align}
which implies that
\begin{align}
	\|\partial_tu_{\theta,h} - \partial_tu_\theta\|_{L^2(0,T;H^{-2}(\Omega))} \leq \sup_{-h\leq s \leq 0}\|\textup{\textbf{J}}_\theta(\cdot+s)
	-\textup{\textbf{J}}_\theta(\cdot)\|_{L^2(0,T;H^2(\Omega))}.	
\end{align}
Since $\sup_{-h\leq s \leq 0}\|\textup{\textbf{J}}_\theta(\cdot+s)
-\textup{\textbf{J}}_\theta(\cdot)\|_{L^2(0,T;H^2(\Omega))} \to 0$ as $h\to  0$, then
\begin{align} \label{conv-dt/du-theta-h}
	\partial_tu_{\theta,h} \to  \partial_tu_\theta \quad \text{strongly in } 
	L^2(0,T;H^{-2}(\Omega)) \; \text{as } h\to  0.
\end{align}
Since $\Phi_{\theta,\epsilon}'(u_{\theta,h})$ and $\partial_tu_{\theta,h}$ are both in $L^2(\Omega_T)$, we have
\begin{align}
	\int_0^t\left\langle \partial_tu_{\theta,h},\Phi_{\theta,\epsilon}'(u_{\theta,h})\right\rangle_{(H^{-2}(\Omega),H^2(\Omega))}d\tau
	&= \int_0^t \int_\Omega \Phi_{\theta,\epsilon}'(u_{\theta,h})\partial_tu_{\theta,h} dxd\tau  \nonumber \\ 
	&= \int_\Omega \int_0^t \partial_t \Phi_{\theta,\epsilon}(u_{\theta,h}(\tau,x)) d\tau dx  \nonumber \\ 
	&=\int_\Omega\Phi_{\theta,\epsilon}(u_{\theta,h}(x,t))dx - \int_\Omega \Phi_{\theta,\epsilon}(u_0(x))dx,
\end{align}
for a.e. $t\in[0,T]$.
Passing to the limit as $h\to  0$ and combining with \qref{conv-Phi'-u-theta-h} and 
\qref{conv-dt/du-theta-h}, we get
\begin{align}\label{Phi-theta-eps-lhs}
	\int_0^t\left\langle \partial_tu_\theta,\Phi_{\theta,\epsilon}'(u_\theta)\right\rangle_{(H^{-2}(\Omega),H^2(\Omega))}d\tau 
	= \int_\Omega \Phi_{\theta,\epsilon}(u_\theta(x,t))dx - \int_\Omega\Phi_{\theta,\epsilon}(u_0(x))dx.
\end{align}
Since $\Phi_{\theta,\epsilon}''$ and $\Phi_{\theta,\epsilon}'''$ are bounded, 
then $\Phi_{\theta,\epsilon}'(u_\theta)\in L^2(0,T;H^2(\Omega))$. 
So $\Phi_{\theta,\epsilon}'(u_\theta)$ is an admissible test function for the equation \qref{PFC-w2a}. 
Hence, for any $t\in [0,T]$,
\begin{align}\label{Phi-theta-eps-rhs}
	\int_0^t\left\langle \partial_tu_\theta,\Phi_{\theta,\epsilon}'(u_\theta)\right\rangle_{(H^{-2}(\Omega),H^2(\Omega))}d\tau 
	&=-\int_0^t\int_\Omega 		M_\theta(u_\theta)\nabla\omega_\theta\cdot\nabla(\Phi_{\theta,\epsilon}'(u_\theta)) dxd\tau  \nonumber \\ 
	&=-\int_0^t\int_\Omega M_\theta(u_\theta)\Phi_{\theta,\epsilon}''(u_\theta)\nabla\omega_\theta\cdot\nabla u_\theta dxd\tau.
\end{align}
Combining \qref{Phi-theta-eps-lhs} and \qref{Phi-theta-eps-rhs} we get \qref{Phi-theta-eps-eqn}.

Now, for each $t \in [0,T]$, since $u_\theta \in C([0,T];C^{3,\alpha}(\bar{\Omega}))$, 
then $u_\theta(\bar{\Omega},t):=\{u_\theta(x,t): x\in\bar{\Omega}\}$ is a compact subset of $\R$, 
so $\Phi_{\theta,\epsilon} \to \Phi_\theta$ and $\Phi''_{\theta,\epsilon} \to \Phi''_\theta$ uniformly in $u_\theta(\bar{\Omega},t)$ as $\epsilon \to 0$, 
hence, $\Phi_{\theta,\epsilon}(u_\theta) \to \Phi_\theta(u_\theta)$ and $\Phi''_{\theta,\epsilon}(u_\theta) \to \Phi''_\theta(u_\theta)$ uniformly in $\bar{\Omega}$ as $\epsilon \to 0$. 
Also, $u_\theta(\overline{\Omega_T}):=\{u_\theta(x,\tau): x\in\bar{\Omega},\tau\in[0,T]\}$ is a compact subset of $\R$, 
so $\Phi''_{\theta,\epsilon} \to \Phi''_\theta$ uniformly in $u_\theta(\overline{\Omega_T})$ as $\epsilon \to 0$, 
hence, $\Phi''_{\theta,\epsilon}(u_\theta) \to \Phi''_\theta(u_\theta)$ uniformly in $\overline{\Omega_T}$ as $\epsilon \to 0$. 
Using Lemma \ref{bnd-lem}, we obtain
\begin{align}
	\|M_\theta(u_\theta)\|_{L^\infty(\Omega_T)} <\infty \quad \text{and} \quad \left\|\sqrt{M_\theta(u_\theta)}\nabla\omega_\theta\right\|_{L^2(\Omega_T)}<\infty.
\end{align}
Since $u_\theta \in C([0,T];C^{3,\alpha}(\bar{\Omega}))$, then $\|\nabla u_\theta\|_{L^\infty(\Omega_T)} <\infty$. 
Thus,
\begin{align}
	&\left| \int_0^t\int_\Omega M_\theta(u_\theta)\nabla\omega_\theta\cdot\nabla u_\theta\left( \Phi_{\theta,\epsilon}''(u_\theta)-\Phi_\theta''(u_\theta)\right)  dxd\tau\right|  \nonumber \\ 
	\leq &\|M_\theta(u_\theta)\|_{L^\infty(\Omega_T)}^{1/2}\|\nabla u_\theta\|_{L^\infty(\Omega_T)} \|\sqrt{M_\theta(u_\theta)}\nabla\omega_\theta\|_{L^2(\Omega_T)} \|\Phi_{\theta,\epsilon}''(u_\theta)-\Phi_\theta''(u_\theta)\|_{L^2(\Omega_T)}  \nonumber \\ 
	& \to 0 \quad \mathrm{as} \; \epsilon \to 0.
\end{align}
So passing to limits as $\epsilon \to 0$ on both sides of \qref{Phi-theta-eps-eqn} and using $\Phi''_\theta(u_\theta)=1/M_\theta(u_\theta)$, we get \qref{Phi-theta-eqn}.
	\end{proof}

\

\textbf{Claim 2.} \textit{Let $(u_\theta)_-:=\min\{u_\theta,0\}$, then, for any $0<\theta<1$,}
\begin{align}\label{bnd-neg2}
	\mathop{\textup{ess sup}}_{0\leq t \leq T}\int_\Omega |(u_\theta)_-+\theta|^2dx\leq C(\theta^2+\theta+\theta^{1/2}).
\end{align}

	\begin{proof}
For any $z\leq0$, we rewrite $\Phi_\theta(z)$ as
\begin{align}\label{phi-theta-neg}
	\Phi_\theta(z) =\frac{1}{2\theta}(z+\theta)^2+(\ln\theta-2)z+(1-\theta).
\end{align}
Since $0<\theta<1$, \qref{phi-theta-neg} implies
\begin{align}
	(z+\theta)^2 \leq 2\theta \Phi_\theta(z) \quad \mathrm{for\; all} \;z\leq 0.
\end{align}
Hence, for any $t\in[0,T]$, since $\Phi_\theta(z) \geq 0$, for all $z\in\R$, we have
\begin{align}\label{bnd-u-theta-neg}
	\int_\Omega |(u_\theta(x,t))_-+\theta|^2dx &\leq 2 \theta\int_\Omega\Phi_\theta(u_\theta(x,t)_-)dx  \nonumber \\ 
	&\leq 2\theta \left( \int_{\{u_\theta\leq 0\}}\Phi_\theta(u_\theta(x,t))dx+\int_{\{u_\theta>0\}}\Phi_\theta(0)dx\right)  \nonumber \\ 
	&\leq 2\theta \left( \int_\Omega\Phi_\theta(u_\theta(x,t))dx+\int_\Omega\Phi_\theta(0)dx\right)  \nonumber \\ 
	&\leq 2\theta \left[ \int_\Omega\Phi_\theta(u_\theta(x,t))dx+\left(1-\frac{\theta}{2} \right)|\Omega|\right] 
\end{align} 

From \qref{bnd-grad-omN} and \qref{bnd-uN-Linf-H2}, we have $\|\nabla\omega_\theta\|_{L^2(\Omega_T)} \leq C/\theta^{1/2}$ 
and $\|\nabla u_\theta\|_{L^2(\Omega_T)} \leq C$. Thus, by \qref{Phi-theta-eqn} and H\"{o}lder's inequality, we have
\begin{align}\
	\left|\int_\Omega\Phi_\theta(u_\theta(x,t))dx\right| &\leq \left|\int_\Omega\Phi_\theta(u_0)dx\right| 
	+ \left|\int_0^t\int_\Omega \nabla\omega_\theta(x,\tau)\cdot\nabla u_\theta(x,\tau) dxd\tau\right|  \nonumber\\
	&\leq \int_\Omega\Phi_\theta(u_0)dx + \|\nabla\omega_\theta\|_{L^2(\Omega_T)}\|\nabla u_\theta\|_{L^2(\Omega_T)} 	\nonumber \\
	&\leq \int_\Omega\Phi(u_0)dx + \frac{C}{\theta^{1/2}},
\end{align}
for any $t\in[0,T]$. 
This inequality and \qref{bnd-u-theta-neg} implies \qref{bnd-neg2}.
	\end{proof}

\section{Proof of Theorem \ref{thm-main2}: Weak solutions for the degenerate mobility case}\label{dege-mobi}
In this section we prove the existence of a nonnegative weak solution to the PFC equation \qref{pfc-eqn1}--\qref{pfc-eqn2} 
with the degenerate mobility $M(u)$ defined by \qref{M(u)}.

\subsection{Weak convergence of $\{u_{\theta}\}$ as $\theta \to 0$}
Fix $u_0\in H^2(\Omega)$ and a sequence of positive numbers $\{\theta_i\}_{i=1}^\infty$ that monotonically decreases to 0 as $i\to \infty$. 
For each $\theta_i$, according to Theorem \ref{thm-main1}, there exists a function
\begin{align*}
	u_{\theta_i}\in L^2(0,T;H^5(\Omega))\cap L^2(0,T;C^{3,\alpha}(\bar{\Omega}))\cap C([0,T];H^1(\Omega))\cap C([0,T];C^{0,\alpha}(\bar{\Omega})), 
\end{align*}
for any $0<\alpha<1/2$, whose weak derivative is
\begin{align*}
	\partial_tu_{\theta_i}\in L^2(0,T;H^{-2}(\Omega)),
\end{align*}
such that, for any $\xi\in L^2(0,T;H^2(\Omega))$,
\begin{align}
	\label{seq-PFC-w}\int_0^T\left\langle \partial_tu_{\theta_i},\xi\right\rangle_{H^{-2}(\Omega),H^2(\Omega)}dt	&=-\int_0^T\int_\Omega M_{\theta_i}(u_{\theta_i})\nabla\omega_{\theta_i}\cdot\nabla\xi dxdt, \\
	\label{seq-om-i}
	\omega_{\theta_i}&= W'(u_{\theta_i}) + \kappa u_{\theta_i} + 2\kappa \nabla u_{\theta_i} + \kappa \Delta u_{\theta_i}, \\
	\label{ui(0)}u_{\theta_i}(x,0)&=u_0(x), \text{ for all } x\in\Omega.
\end{align}
For simplicity, we use the notations $u_i:=u_{\theta_i},\omega_i:=\omega_{\theta_i}$ and $M_i:=M_{\theta_i}$. 
By the proofs of the lemmas in Section \ref{1st-estimate}, the bounds on the right hand sides of \qref{bnd-uN-Linf-H2},  \qref{bnd-M-omN} and \qref{bnd-uN_t} depend only on $d,T,\Omega,b_1, b_2, b_3, b_4, m, \kappa,\epsilon$ and $u_0$, but not on $\theta$. 
Therefore, there exists a constant $C>0$ independent on $\{\theta_i\}_{i=1}^\infty$ such that
\begin{align}
	\label{bnd-ui-H2}\|u_i\|_{L^\infty(0,T;H^2(\Omega))} &\leq C, \\
	\label{bnd-ui_t}\|\partial_tu_i\|_{L^2(0,T;H^{-2}(\Omega))} &\leq C, \\
	\label{bnd-M-ui-grad-om-i} \left\|\sqrt{M_i(u_i)}\nabla\omega_i \right\|_{L^2(\Omega_T)} &\leq C.
\end{align}
By the Aubin--Lions lemma, we have
\begin{align*}
	\{f\in 	L^\infty(0,T;H^2(\Omega)):\partial_tf\in L^2(0,T;H^{-2}(\Omega))\}\subset\subset C([0,T];H^1(\Omega)),
\end{align*}
and
\begin{align*}
	\{f\in 	L^\infty(0,T;H^2(\Omega)):\partial_tf\in L^2(0,T;H^{-2}(\Omega))\}\subset\subset C([0,T];C^{0,\alpha}(\bar{\Omega})),
\end{align*}
for any $0<\alpha<1/2$. 
Combining with the weak compactness in $L^\infty(0,T;H^2(\Omega))$ and $L^2(0,T;H^{-2}(\Omega)$, 
there exist a subsequence of $\{u_i\}$ (not relabeled) and a function
\begin{align*}
	u\in L^\infty(0,T;H^2(\Omega))\cap C([0,T];H^1(\Omega))\cap C([0,T];C^{0,\alpha}(\bar{\Omega}))
\end{align*}
such that as $i\to \infty$,
\begin{align}
	\label{ui-w*conv-H2}u_i&\rightharpoonup u \quad \text{weakly--* in } L^\infty(0,T;H^2(\Omega)), \\	
	\label{ui_t-wconv}\partial_tu_i&\rightharpoonup \partial_tu \quad \text{weakly in } L^2(0,T;H^{-2}(\Omega)),\\
	\label{ui-conv-C}u_i&\to  u \quad \text{strongly in } C([0,T];C^{0,\alpha}(\bar{\Omega})),  \text{ for any } 0<\alpha<1/2, \\
	\label{ui-conv-W2p}u_i&\to  u \quad \text{strongly in } C([0,T];H^1(\Omega)) \; \text{and a.e. in } \Omega_T.
\end{align}

From \qref{ui-conv-C} and \qref{ui-conv-W2p}, the uniform convergence of $M_i\to  M$ and $\sqrt{M_i}\to  \sqrt{M}$ as $i\to \infty$, and the General Lebesgue Dominated Convergence Theorem (see \cite{Royden}, page 89, Theorem 19) we have
\begin{align}
	\label{Mi-conv}M_i(u_i)&\to  M(u) \quad \text{strongly in } C([0,T];L^{d/2}(\Omega)), \\
	\label{sqrtMi-conv}\sqrt{M_i(u_i)}&\to  \sqrt{M(u)} \quad \text{strongly in } C([0,T];L^d(\Omega)),
\end{align}
as $i \to \infty$. 
By \qref{bnd-M-ui-grad-om-i}, there exist a subsequence of $\{\sqrt{M_i(u_i)}\nabla\omega_i\}$ (not relabeled) and a function $\Lambda\in L^2(\Omega_T;\R^d)$ such that
\begin{align}
	\label{sqrtMi-grad-om-i-conv}\sqrt{M_i(u_i)}\nabla\omega_i\rightharpoonup \Lambda \quad \text{weakly in } L^2(\Omega_T;\R^d)  \text{ as } i \to \infty.
\end{align}
Combining with \qref{sqrtMi-conv} we get
\begin{align}
	\label{Mi-grad-om-i-conv}M_i(u_i)\nabla\omega_i\rightharpoonup \sqrt{M(u)}\Lambda \quad \text{weakly in } L^2(0,T;L^{2d/(d+2)}(\Omega;\R^d))  \text{ as } i \to \infty.
\end{align}
So, taking the limits as $i\to \infty$ in \qref{seq-PFC-w}, we obtain
\begin{align}\label{PFC-w3a}
	\int_0^T\left\langle\partial_tu,\xi\right\rangle_{H^{-2}(\Omega),H^2(\Omega)}dt=-\int_0^T\int_\Omega \sqrt{M(u)}\Lambda\cdot\nabla\xi dxdt
\end{align}
for all $\xi\in L^2(0,T;H^2(\Omega))$. 
For the initial data, from \qref{ui(0)} and \qref{ui-conv-C}, we have $u(x,0)=u_0(x)$, for all $x\in\Omega$.

Next, we will show that the function $u$ solves the PFC equation \qref{pfc-eqn1}--\qref{pfc-eqn2} in any open subset of $\Omega_T$ in which $u$ has sufficient regularity. 
Moreover, the set in which $u$ does not have sufficient regularity is contained in the set in which $M(u)$ is degenerate and another set of Lebesgue measure zero.

\subsection{Weak solution to the degenerate PFC equation}

Choose a sequence of positive numbers $\{\delta_j\}_{j=1}^\infty$ that monotonically decreases to $0$. 
For each $\delta_j$, by \qref{ui-conv-W2p} and Egorov's theorem, there exists a subset $B_j\subset\Omega_T$ with $|\Omega_T\backslash B_j|<\delta_j$ such that
\begin{align}\label{ui-conv-unif}
	u_i\to  u \quad \text{uniformly in } B_j \text{ as } i\to \infty.
\end{align}
We may choose $B_j$'s so that $B_1\subset B_2\subset ...\subset B_j\subset B_{j+1}\subset ...\subset\Omega_T$. 
Let 
\begin{align*}
	B :=\bigcup_{j=1}^{\infty} B_j,
\end{align*}
then $|\Omega_T\backslash B|=0$. We also define
\begin{align*}
	P_j:=\{(x,t)\in\Omega_T: u(x,t)>\delta_j\},
\end{align*}
then $P_1\subset P_2\subset ...\subset P_j\subset P_{j+1}\subset ...\subset\Omega_T$. 
Let
\begin{align*}
	P:=\bigcup_{j=1}^{\infty} P_j = \{(x,t)\in\Omega_T:u(x,t)>0\}.
\end{align*}
For each $j$, $B_j$ can be split into two parts:
\begin{align*}
	&B_j\cap P_j, \text{ where } u>\delta_j  \text{ and } u_i\to  u  \text{ uniformly as } i \to \infty,\\
	&B_j\backslash P_j,  \text{ where } u\leq\delta_j  \text{ and } u_i\to  u \text{ uniformly as } i \to \infty.
\end{align*}
By the choice of $B_j$ and $P_j$, we have
\begin{align}\label{incr-set-seq}
	(B_1\cap P_1)\subset (B_2\cap P_2)\subset...\subset (B_j\cap P_j)\subset (B_{j+1}\cap P_{j+1})\subset...\subset (B\cap P),
\end{align}
and
\begin{align}\label{lim-set-seq}
	B\cap P = \bigcup_{j=1}^\infty(B_j\cap P_j).
\end{align}

For any $\Phi\in L^2(0,T;L^{2d/(d-2)}(\Omega;\R^d))$ and for each $j$, we have
\begin{align}\label{3part-sum}
	\int_{\Omega_T}M_i(u_i)\nabla\omega_i\cdot\Phi dxdt=&\int_{\Omega_T\backslash B_j}M_i(u_i)\nabla\omega_i\cdot\Phi dxdt +\int_{B_j\cap P_j}M_i(u_i)\nabla\omega_i\cdot\Phi dxdt  \nonumber \\ 
	&+\int_{B_j\backslash P_j}M_i(u_i)\nabla\omega_i\cdot\Phi dxdt.
\end{align}
As $i\to \infty$, by \qref{Mi-grad-om-i-conv}, the left hand side of \qref{3part-sum} has the limit
\begin{align}\label{3part-sum-lhs}
	\lim\limits_{i\to \infty}\int_{\Omega_T}M_i(u_i)\nabla\omega_i\cdot\Phi dxdt=\int_{\Omega_T}\sqrt{M(u)}\Lambda\cdot\Phi dxdt.
\end{align}
For the first term on the right hand side of \qref{3part-sum}, since $\lim_{j\to \infty}|\Omega_T\backslash B_j| = |\Omega_T\backslash B| = 0$, 
we have
\begin{align}\label{3part-sum-rhs1}
	\lim\limits_{j\to \infty}\lim\limits_{i\to \infty}\int_{\Omega_T\backslash B_j}M_i(u_i)\nabla\omega_i\cdot\Phi dxdt=\lim\limits_{j\to \infty}\int_{\Omega_T\backslash B_j}\sqrt{M(u)}\Lambda\cdot\Phi dxdt=0.
\end{align}

We now analyze the second term on the right hand side of \qref{3part-sum}. 
Because $u_i\to  u$ uniformly in $B_1$ as $i \to \infty$, 
there exists an integer $N_1>0$ such that, for all $i\geq N_1$,
\begin{align*}
	u_i>\frac{\delta_1}{2}  \text{ in } B_1\cap P_1, \quad \text{and} \quad u_i\leq 2\delta_1 \text{ in } B_1\backslash P_1.
\end{align*}
Then by \qref{bnd-M-ui-grad-om-i}, for any $i\geq N_1$, we have
\begin{align}
	\frac{\delta_1}{2}\int_{B_1\cap P_1}|\nabla\omega_i|^2dxdt &\leq \int_{B_1\cap P_1}M_i(u_i)|\nabla\omega_i|^2 dxdt  \nonumber \\ 
	&\leq \int_{\Omega_T}M_i(u_i)|\nabla\omega_i|^2 dxdt \leq C,
\end{align}
which implies $\{\nabla\omega_i\}_{i=N_1}^\infty$ is bounded in $L^2(B_1\cap P_1;\R^d)$. 
So there exist a subsequence $\{\nabla\omega_{1,k}\}_{k=1}^\infty$ of $\{\nabla\omega_i\}_{i=N_1}^\infty$ and a function $\Psi_1\in L^2(B_1\cap P_1;\R^d)$ such that
\begin{align*}
	\nabla\omega_{1,k} \rightharpoonup\Psi_1 \quad \text{weakly in } L^2(B_1\cap P_1;\R^d) \text{ as } k\to \infty.
\end{align*}
We also write $\{u_{1,k}\}_{k=1}^\infty$ as the subsequence of $\{u_i\}_{i=N_1}^\infty$ corresponding to $\{\nabla\omega_{1,k}\}_{k=1}^\infty$. 
Using the same process, for each $j=1,2,...$, we obtain a subsequence $\{\nabla\omega_{j,k}\}_{k=1}^\infty$ of $\{\nabla\omega_{j-1,k}\}_{k=1}^\infty$ and a function $\Psi_j\in L^2(B_j\cap P_j;\R^d)$ such that
\begin{align*}
	\nabla\omega_{j,k} \rightharpoonup\Psi_j \quad \text{weakly in } L^2(B_j\cap P_j;\R^d) \text{ as } k\to \infty,
\end{align*}
and we also write $\{u_{j,k}\}_{k=1}^\infty$ as the subsequence of $\{u_{j-1,k}\}_{k=1}^\infty$ corresponding to $\{\nabla\omega_{j,k}\}_{k=1}^\infty$. 
Moreover, for any $j,k=1,2,...$, we have
\begin{align}\label{split-value-uj}
	u_{j,k}>\frac{\delta_j}{2}  \text{ in } B_j\cap P_j, \quad \text{and} \quad u_{j,k}\leq 2\delta_j \text{ in } B_j\backslash P_j.
\end{align}
Also, by \qref{incr-set-seq} and \qref{lim-set-seq}, $\Psi_j=\Psi_{j-1}$ a.e. in $B_{j-1}\cap P_{j-1}$. 
Moreover, we can extend each function $\Psi_j\in L^2(B_j\cap P_j;\R^d)$ to a function $\widehat{\Psi}_j\in L^2(B\cap P;\R^d)$ by defining
\begin{align*}
	\widehat{\Psi}_j(x,t)=\begin{cases}
		\Psi_j(x,t) &\;, \; \text{if } (x,t)\in B_j\cap P_j,\\
		0 &\;, \; \text{if } (x,t)\in (B\cap P)\backslash(B_j\cap P_j). \end{cases}.
\end{align*}
With this definition, $\lim_{j\to \infty}\widehat{\Psi}_j(x,t)$ exists for a.e. $(x,t)\in B\cap P$. 
Define
\begin{align*}
	\Psi(x,t):=\lim_{j\to \infty}\widehat{\Psi}_j(x,t) \quad \text{ for a.e. } (x,t) \in B\cap P,
\end{align*}
then $\Psi(x,t)=\Psi_j(x,t)$, for a.e. $(x,t)\in B_j\cap P_j$ and for any $j=1,2,...$.

Using a standard diagonal argument, we can extract a subsequence $\{\nabla\omega_{k,N_k}\}_{k=1}^\infty$ such that, for each $j=1,2,...$,
\begin{align}\label{grad-om-k-wconv1}
	\nabla\omega_{k,N_k} \rightharpoonup\Psi \quad \text{weakly in } L^2(B_j\cap P_j;\R^d) \text{ as } k\to \infty.
\end{align}
Combining with \qref{sqrtMi-conv}, for each $j=1,2,...$, we have
\begin{align}\label{sqrtMk-grad-wk-conv}
	\chi_{B_j\cap P_j}\sqrt{M_{k,N_k}(u_{k,N_k})}\nabla\omega_{k,N_k} \rightharpoonup \chi_{B_j\cap P_j}\sqrt{M(u)}\Psi \; \text{weakly in } L^2(0,T;L^\frac{2d}{d+2}(\Omega;\R^d))
\end{align}
as $k\to \infty$, where $\chi_{B_j\cap P_j}$ is the characteristic function of $B_j\cap P_j \subset\Omega_T$. 
Then, combining with \qref{sqrtMi-grad-om-i-conv}, we have $\Lambda=\sqrt{M(u)}\Psi$ in every set $B_j\cap P_j$, which implies that
\begin{align}
	\Lambda=\sqrt{M(u)}\Psi \quad \text{in }  B\cap P.
\end{align}
Consequently, by \qref{Mi-grad-om-i-conv}, we have, as $k \to \infty$,
\begin{align}\label{3part-sum-rhs2}
	\chi_{B\cap P}M_{k,N_k}(u_{k,N_k})\nabla\omega_{k,N_k} \rightharpoonup \chi_{B\cap P}M(u)\Psi \; \text{weakly in } L^2(0,T;L^{2d/(d+2)}(\Omega;\R^d)).
\end{align}

For the third term on the right hand side of \qref{3part-sum}, from \qref{bnd-M-ui-grad-om-i} and \qref{split-value-uj}, and the generalized H\"{o}lder's inequality, we have
\begin{align}\label{3part-sum-rhs3-est}
	&\left|\int_{B_j\backslash P_j}M_{k,N_k}(u_{k,N_k})\nabla\omega_{k,N_k}\cdot\Phi dxdt\right|  \nonumber \\ 
	&\leq \left(\sup_{B_j\backslash P_j}\sqrt{M_{k,N_k}(u_{k,N_k})}\right)
	\left\| \sqrt{M_{k,N_k}(u_{k,N_k})}\nabla\omega_{k,N_k}\right\|_{L^2(B_j\backslash P_j)}
	\|\Phi\|_{L^2(B_j\backslash P_j)}  \nonumber \\ 
	&\leq \max \{\sqrt{2\delta_j},\sqrt{\theta_{k,N_k}} \}
	\left\| \sqrt{M_{k,N_k}(u_{k,N_k})}\nabla\omega_{k,N_k}\right\|_{L^2(\Omega_T)}
	\|\Phi\|_{L^2(0,T;L^2(\Omega))}  \nonumber \\ 
	&\leq \max \{\sqrt{2\delta_j},\sqrt{\theta_{k,N_k}} \}
	\left\| \sqrt{M_{k,N_k}(u_{k,N_k})}\nabla\omega_{k,N_k}\right\|_{L^2(\Omega_T)}
	\|\Phi\|_{L^2(0,T;L^{2d/(d-2)}(\Omega))}|\Omega|^{1/d}  \nonumber \\ 
	&\leq C \max \{\sqrt{2\delta_j},\sqrt{\theta_{k,N_k}} \}.
\end{align}
Since $\lim\limits_{j\to \infty} \delta_j = 0$ and $\lim\limits_{k\to \infty} \theta_{k,N_k} = 0$, 
\qref{3part-sum-rhs3-est} implies that
\begin{align}\label{3part-sum-rhs3}
	&\lim\limits_{j\to \infty}\lim\limits_{k\to \infty} \int_{B_i\backslash P_j}M_{k,N_k}(u_{k,N_k})\nabla\omega_{k,N_k}\cdot\Phi dxdt = 0
\end{align}	

Now, in \qref{3part-sum}, replacing $u_i$ with the above subsequence $u_{k,N_k}$ and taking the limits first as $k\to \infty$ and then as $j\to \infty$, by \qref{3part-sum-lhs} \qref{3part-sum-rhs1}, \qref{3part-sum-rhs2} and \qref{3part-sum-rhs3}, we have
\begin{align}
	\int_{\Omega_T} \sqrt{M(u)}\Lambda\cdot\Phi dxdt &= \lim_{j\to \infty} \int_{B_j\cap P_j}M(u)\Psi\cdot\Phi dxdt  \nonumber \\ 
	&= \int_{B\cap P}M(u)\Psi\cdot\Phi dxdt,
\end{align}
for any $\Phi\in L^2(0,T;L^{2d/(d-2)}(\Omega;\R^d))$. 
Combining this equation with \qref{PFC-w3a}, we see that $u$ and $\Psi$ satisfy the weak formulation
\begin{align}\label{PFC-w4}
	\int_0^T\left\langle\partial_tu,\xi\right\rangle_{H^{-2}(\Omega),H^2(\Omega)}dt=-\int_{B\cap P}M(u)\Psi\cdot\nabla\xi dxdt,
\end{align}
for all $\xi\in L^2(0,T;H^2(\Omega))$.

\subsection{The relationship between $\Psi$ and $u$}
From \qref{pfc-eqn1}--\qref{pfc-eqn2} and \qref{PFC-w4}, we expect $\Psi$ to be
\begin{align*}
	\Psi = \nabla \omega = W''(u)\nabla u + \kappa\nabla u + 2\kappa\nabla\Delta u + \kappa\nabla\Delta^2 u,
\end{align*}
if the weak solution $u$ has a sufficient regularity. 
However, given the known regularities of $u$, the terms $\nabla\Delta u$ and $\nabla\Delta^2 u$ are only defined in the sense of distributions and may not even be functions. 
Therefore, we need higher regularity conditions on $u$.

\textbf{Claim.} \textit{For any open set $U\subset\Omega$ such that $\nabla\Delta^2u\in L^q(U_T)$, for some $q>1$ ($q$ may depend on $U$), where $U_T=U\times(0,T)$, we have}
\begin{align}
	\Psi=W''(u)\nabla u + \kappa\nabla u + 2\kappa\nabla\Delta u + \kappa\nabla\Delta^2 u \quad in \; U_T.
\end{align}

	\begin{proof}
Let $U\subset\Omega$ be an open set such that $\nabla\Delta^2u\in L^q(U_T)$, for some $q>1$. 
Let us consider the limit of 
\begin{align}\label{grad-om-k}
	\nabla\omega_{k,N_k}=W''(u_{k,N_k})\nabla u_{k,N_k} + \kappa\nabla u_{k,N_k} + 2\kappa\nabla\Delta u_{k,N_k} + \kappa\nabla\Delta^2 u_{k,N_k}
\end{align}
as $k\to \infty$. 
Since $\nabla\Delta^2u\in L^q(U_T)$ and $u\in L^\infty(0,T;H^2(\Omega))\cap C([0,T];H^1(\Omega))\cap C([0,T];C^{0,\alpha}(\bar{\Omega}))$, 
for any $0<\alpha<1/2$, then we have
\begin{align*}
	\Delta^2u &\in L^q(0,T;W^{1,q}(U)), \quad 
	\nabla\Delta u \in L^q(0,T;W^{2,q}(U)), \\
	\Delta u &\in L^q(0,T;W^{3,q}(U))\cap L^\infty(0,T;L^2(U)), \\
	\nabla u &\in L^q(0,T;W^{4,q}(U))\cap L^\infty(0,T;H^1(U))\cap C([0,T];L^2(U)), \\
	u &\in L^q(0,T;W^{5,q}(U))\cap L^\infty(0,T;H^2(U))\cap C([0,T];H^1(U))\cap C([0,T];C^{0,\alpha}(\bar{U})),
\end{align*}
for any $0<\alpha<1/2$.

By \qref{ui-w*conv-H2}, we have, as $k\to \infty$,
\begin{align}
	\label{grad-uk-wconv}\nabla u_{k,N_k}&\rightharpoonup \nabla u \quad \text{weakly--* in } L^\infty(0,T;H^1(U)),\\
	\label{grad-lapl-uk-wconv}\nabla\Delta u_{k,N_k}&\rightharpoonup \nabla\Delta u \quad \text{weakly--* in } L^\infty(0,T;(H^1(U))'), \\
	\label{grad-lapl2-uk-wconv}\nabla\Delta^2 u_{k,N_k}&\rightharpoonup \nabla\Delta^2 u \quad \text{weakly--* in } L^\infty(0,T;(H^3(U))').
\end{align}
We see that the bound on the right hand side of \qref{bnd-W''-Linf} depends only on $d,T,\Omega$, $b_1, b_2, b_3, b_4$, $m,\kappa,\epsilon$ and $u_0$, but not on $\theta$. 
Combining with \qref{ui-conv-W2p}, we obtain
\begin{align}
	W''(u_{k,N_k}) &\to W''(u) \quad \text{strongly in } C([0,T]; L^2(U))  \text{ as } k \to \infty.
\end{align}
Combining with \qref{grad-uk-wconv}, we have
\begin{align}\label{W''-graduk-wk-wconv}
	W''(u_{k,N_k})\nabla u_{k,N_k}\rightharpoonup W''(u)\nabla u \quad \text{weakly--* in } L^\infty(0,T;L^1(U))  \text{ as } k \to \infty.
\end{align}
Then, using \qref{grad-om-k}, \qref{grad-uk-wconv}, \qref{grad-lapl-uk-wconv}, \qref{grad-lapl2-uk-wconv} and \qref{W''-graduk-wk-wconv}, we obtain
\begin{align*}
	\nabla\omega_{k,N_k}\rightharpoonup W''(u)\nabla u + \kappa\nabla u + 2\kappa\nabla\Delta u + \kappa\nabla\Delta^2 u \quad \text{weakly--* in } L^\infty(0,T;(H^3(U))')
\end{align*}
as $k \to \infty$. 
Combining this with \qref{grad-om-k-wconv1}, by the uniqueness of the weak limit, we get
\begin{align*}
	\Psi=W''(u)\nabla u + \kappa\nabla u + 2\kappa\nabla\Delta u + \kappa\nabla\Delta^2 u \quad\text{in } B\cap P\cap U_T.
\end{align*}
Because $\Psi$ is originally defined only in $B\cap P$, 
we may extend it to $U_T$ by defining 
\begin{align*}
	\Psi : = W''(u)\nabla u + \kappa\nabla u + 2\kappa\nabla\Delta u + \kappa\nabla\Delta^2 u \quad \text{in } U_T\backslash(B\cap P). 
	\end{align*}
The claim is established.
	\end{proof}

\

Now define the set
\begin{align*}
	\cA:=\bigcup\{U_T=U\times(0,T):& \; U  \text{ is open in }\Omega \text{ and } \nabla\Delta^2u\in L^q(U_T),  \\
	&\text{for some } q>1 \text{ that may depend on } U\},
\end{align*}
then $\cA$ is open in $\Omega_T$ and
\begin{align*}
	\Psi=W''(u)\nabla u + \kappa\nabla u + 2\kappa\nabla\Delta u + \kappa\nabla\Delta^2 u \quad\text{in } \cA.
\end{align*}
So $\Psi$ is defined in $(B\cap P)\cup\cA$. To extend $\Psi$ to $\Omega_T$, notice that
\begin{align*}
	\Omega_T\backslash((B\cap P)\cup\cA)\subset \Omega_T\backslash(B\cap P)=(\Omega_T\backslash B)\cup(\Omega_T\backslash P).
\end{align*}
Since $|\Omega_T\backslash B|=0$ and $M(u)=0$ in $\Omega_T\backslash P$, 
the value of $\Psi$ outside of $(B\cap P)\cup\cA$ does not contribute to the integral on the right hand side of \qref{PFC-w4}, 
so we may define $\Psi \equiv 0$ in  $\Omega_T\backslash((B\cap P)\cup\cA)$.

\subsection{Energy inequality}
By the energy inequality \qref{ener-ineq1}, for any $j,k=1,2,...$ and any $t \in [0,T]$, we have
\begin{align}\label{ener-ineq3}
	&\int_\Omega W(u_{k,N_k}(x,t)) + \kappa\left(\frac{1}{2}|u_{k,N_k}(x,t)|^2 - |\nabla u_{k,N_k}(x,t)|^2 + \frac{1}{2}|\Delta u_{k,N_k}(x,t)|^2\right) dx   \nonumber \\ 
	&+\int_{\Omega_t\cap B_j\cap P_j} M_\theta(u_{k,N_k}(x,\tau))|\nabla\omega_{k,N_k}(x,\tau))|^2dxd\tau  \nonumber \\ 
	&\leq\int_\Omega W(u_0) + \kappa\left(\frac{1}{2}u_0^2 - |\nabla u_0|^2 + \frac{1}{2}|\Delta u_0|^2\right) dx.
\end{align}
Using \qref{ui-conv-C}, \qref{ui-conv-W2p} and \qref{sqrtMk-grad-wk-conv}, 
by taking the limits as $k\to \infty$ first and then $j\to \infty$ in \qref{ener-ineq3}, we obtain the energy inequality \qref{ener-ineq2}.

\subsection{Nonnegative weak solution with positive initial data}
Assume that the initial data $u_0(x)>0$, for all $x\in\Omega$. 
By \qref{bnd-neg1}, there exists a constant $C$ independent on $\{\theta_i\}_{i=1}^\infty$ such that, for each $i=1,2,...$,
\begin{align}\label{bnd-neg3}
	\mathop{\textup{ess sup}}_{0\leq t \leq T}\int_\Omega|(u_i(x,t))_-+\theta_i|^2dx\leq C(\theta_i^2+\theta_i+\theta_i^{1/2}).
\end{align}
Passing to the limits as $i\to \infty$ in \qref{bnd-neg3}, 
by the convergence in \qref{ui-conv-C} and \qref{ui-conv-W2p},  
we get $u\geq 0$ a.e. in $\Omega_T$. 
Moreover, since $u_0>0$ in $\Omega$, $u$ is not always zero in $\Omega_T$ owing to the continuity of $u$ in $\Omega_T$. 
This completes the proof of Theorem \ref{thm-main2}.

\section{Acknowledgments}

This work was supported by the US National Science Foundation (NSF) under the grant NSF-DMS 2309547 (Steve Wise). 

\section{Conflict of Interest}
The authors declare that they have no known conflicts of interest relevant to this article.
	
\bibliographystyle{plain}
\bibliography{Luong-bib}

\end{document}